\tikzstyle{box} = [rectangle, minimum width=1cm, minimum height=3cm,text centered, draw=black, fill=red!10]
\tikzstyle{box} = [rectangle, minimum width=1cm, minimum height=3cm,text centered, draw=black, fill=red!10]
\renewcommand{\Re}[1]{\operatorname{Re} #1 }
\newcommand{\R}{\mathbb{R}}
\newcommand{\C}{\mathbb{C}}
\newcommand{\norm}[1]{\left\| #1 \right\|}
\let\oldenumerate=\enumerate
	\def\enumerate{
	\oldenumerate
	\setlength{\itemsep}{5pt}
	}
\let\olditemize=\itemize
	\def\itemize{
	\olditemize
	\setlength{\itemsep}{5pt}
	}
\newtheorem{theorem}[equation]{Theorem}
\newtheorem{lemma}[equation]{Lemma}
\numberwithin{equation}{section}
\theoremstyle{definition}
\newtheorem{remark}[equation]{Remark}
\newcommand{\Li}{\operatorname{Li}}
\newcommand{\floor}[1]{\lfloor #1 \rfloor}
\begin{document}

\title[The Prime Number Theorem]{The Prime Number Theorem\\ as a Capstone in a Complex Analysis Course}
	\author{Stephan Ramon Garcia}
	\address{Department of Mathematics, Pomona College, 610 N. College Ave., Claremont, CA 91711} 
	\email{stephan.garcia@pomona.edu}
	\urladdr{\url{http://pages.pomona.edu/~sg064747}}

\thanks{SRG supported by NSF Grant DMS-1800123.}

\subjclass[2010]{11N05, 30-01, 97I80}

\keywords{prime number theorem, complex analysis, complex variables, Riemann zeta function, Euler product formula}

\begin{abstract}
We present a detailed proof of the prime number theorem suitable for a typical undergraduate- or graduate-level complex analysis course.  Our presentation is particularly useful for any instructor who seeks to use the prime number theorem for a series of capstone lectures, a scaffold for a series of guided exercises, or as a framework for an inquiry-based course.  We require almost no knowledge of number theory, for our aim is to make a complete proof of the prime number theorem widely accessible to complex analysis instructors.  In particular, we highlight the potential pitfalls and subtleties that may catch the instructor unawares when using more terse sources.
\end{abstract}
\maketitle

\section{Introduction}
The prime number theorem is one of the great theorems in mathematics.  It unexpectedly connects the discrete and the continuous with the elegant statement
\begin{equation*}
\lim_{x\to\infty} \frac{\pi(x)}{x/\log x} =1,
\end{equation*}
in which $\pi(x)$ denotes the number of primes at most $x$.  The original proofs, and most modern proofs,
make extensive use of complex analysis.
Our aim here is to present, for the benefit of complex analysis instructors, a complete
proof of the prime number theorem suitable either as a sequence of capstone lectures at the end of the term, a scaffold for a series of exercises,
or a framework for an entire inquiry-based course.  We require almost no knowledge of number theory.  In fact, our aim is to make
a detailed proof of the prime number theorem widely accessible to complex analysis instructors of all stripes.

Why does the prime number theorem belong in a complex-variables course?  At various stages, the proof utilizes
complex power functions, the complex exponential and logarithm, power series, Euler's formula, analytic continuation, the Weierstrass $M$-test, locally uniform convergence, zeros and poles, residues, Cauchy's theorem, Cauchy's integral formula, Morera's theorem, and much more. 
Familiarity with limits superior and inferior is needed toward the end of the proof,
and there are plenty of inequalities and infinite series.

The prime number theorem is one of the few landmark mathematical results whose proof 
is fully accessible at the undergraduate level.  Some epochal theorems, like the Atiyah--Singer index theorem,
can barely be stated at the undergraduate level.  Others, like Fermat's last theorem, 
are simply stated, but have proofs well beyond the undergraduate curriculum.
Consequently, the prime number theorem provides a unique opportunity for students
to experience a mathematical capstone that draws upon the entirety of a course and which culminates in the complete
proof of a deep and profound result that informs much current research.  In particular, students gain an
understanding of and appreciation for the Riemann Hypothesis, 
perhaps the most important unsolved problem in mathematics.
One student in the author's recent class proclaimed,
``I really enjoyed the prime number theorem being the capstone of the course. It felt rewarding to have a large proof of an important theorem be what we were working up towards as opposed to an exam.''  Another added,
``I enjoyed the content very much\ldots I was happy I finally got to see a proof of the result.''

Treatments of the prime number theorem in complex analysis texts, if they appear at all, are often terse
and nontrivial to expand at the level of detail needed for our purposes.
For example, the standard complex analysis texts 
\cite{Boas,Dangelo,Krantz, Marsden, Narasimhan, Rudin, Saff, Sarason, Stewart}
do not include proofs of the prime number theorem, although they distinguish themselves 
in many other respects.  A few classic texts 
\cite{Ahlfors, Conway, Markushevich, Titchmarsh}
cover Dirichlet series or the Riemann zeta function to a significant extent, although they do not prove the prime number
theorem.  Bak and Newman \cite[Sec.~19.5]{Bak} does an admirable job, although their presentation is dense (five pages).
Marshall's new book assigns the proof as a multi-part exercise that occupies half a page \cite[p.~191]{Marshall}.
Simon's four-volume treatise on analysis \cite{Simon} and the Stein--Shakarchi analysis series \cite{Shakarchi} devote a considerable
amount of space to topics in analytic number theory and include proofs of the prime number theorem.  
Lang's graduate-level complex analysis text \cite{Lang} thoroughly treats the prime number theorem, although he punts at a crucial point with an apparent note-to-self ``(Put the details as an exercise)''.   

On the other hand, number theory texts may present interesting digressions or tangential results that are not strictly
necessary for the proof of the prime number theorem.  They sometimes
suppress or hand wave through the complex analysis details we hope to exemplify.  
All of this may make navigating and outlining a streamlined proof difficult for the nonspecialist.
We do not give a guided tour of analytic number theory, nor do we dwell on results or notation that are unnecessary for our main goal: to present an efficient proof of the prime number theorem suitable for inclusion in a 
complex analysis course by an instructor who is not an expert in number theory.
For example, we avoid the introduction of general infinite products and Dirichlet series, Chebyshev's function $\psi$ and its integrated cousin $\psi_1$, the von Mangoldt function, the Gamma function, the Jacobi theta function, Poisson summation, and other staples of typical proofs.  Some fine number theory texts which contain 
complex-analytic proofs of the prime number theorem are
\cite{Apostol, Edwards, Fine, Ivic, Luca, Tenenbaum2}.

No instructor wants to be surprised in the middle of the lecture by a major logical gap in their notes.
Neither do they wish to assign problems that they later find are inaccurately stated or require
theorems that should have been covered earlier.
We hope that our presentation here will alleviate these difficulties.  
That is, we expect that a complex analysis instructor can use as much or as little of our
proof as they desire, according to the level of rigor and detail that they seek.
No step is extraneous and every detail is included.

The proof we present is based on Zagier's \cite{Zagier} presentation of Newman's proof \cite{Newman}
(see also Korevaar's exposition \cite{Korevaar}).
For our purposes their approach is ideal: it involves a minimal amount of number theory and a maximal amount of complex analysis.  
The number-theoretic content of our proof is almost trivial:
only the fundamental theorem of arithmetic and the definition of prime numbers are needed.
Although there are elementary proofs \cite{Erdos, Selberg}, in the sense that no complex analysis is required, these are obviously unsuitable for a complex analysis course.

This paper is organized as follows.  Each section is brief, providing the instructor with
bite-sized pieces that can be tackled in class or in (potentially inquiry-based) assignments.
We conclude many sections with related remarks that highlight common conceptual issues
or opportunities for streamlining if other tools, such as Lebesgue integration, are available.
Proofs of lemmas and theorems are often broken up into short steps
for easier digestion or adaptation as exercises.
Section \ref{Section:Asymptotic} introduces the prime number theorem and asymptotic equivalence $(\sim)$.
We introduce the Riemann zeta function $\zeta(s)$ in Section \ref{Section:RiemannZeta}, along with the Euler product formula.
In Section \ref{Section:Continue} we prove the zeta function 
has a meromorphic continuation to $\Re s > 0$.
We obtain series representations for $\log \zeta(s)$ and $\log|\zeta(s)|$ in Section \ref{Section:Log}.
These are used in Section \ref{Section:Nonvanish} 
to establish the nonvanishing of the zeta function on the vertical line $\Re s = 1$.
Section \ref{Section:Theta} introduces Chebyshev's function $\vartheta(x) = \sum_{p \leq x} \log p$ and establishes
a simple upper bound (needed later in Section \ref{Section:Newman}).  In 
Section \ref{Section:Phi}, we prove that a function related to $\log \zeta(s)$ extends analytically to
an open neighborhood of the closed half plane $\Re s \geq 1$.
Section \ref{Section:Laplace} provides a brief lemma on the analyticity of Laplace transforms.
Section \ref{Section:Newman} is devoted to the proof of Newman's Tauberian theorem, a true festival of complex analysis.
Section \ref{Section:Improper} uses Newman's theorem 
to establish the convergence of a certain improper integral, 
which is shown to imply $\vartheta(x) \sim x$ in Section \ref{Section:ThetaAsymptotic}.
We end in Section \ref{Section:Conclusion} with the conclusion of the proof of the prime number theorem.

\noindent\textbf{Acknowledgments.} We thank Ken Ribet, S.~Sundara Narasimhan, and Robert Sachs for helpful comments.

\section{Prime Number Theorem}\label{Section:Asymptotic}
Suppose that $f(x)$ and $g(x)$ are real-valued functions that are defined and nonzero for sufficiently large $x$.  
We write $f(x) \sim g(x)$ if
\begin{equation*}
\lim_{x\to\infty} \frac{f(x)}{g(x)} = 1
\end{equation*}
and we say that $f$ and $g$ are \emph{asymptotically equivalent} when this occurs.
The limit laws from calculus imply that $\sim$ is an equivalence relation.

Let $\pi(x)$ denote the number of primes at most $x$.
For example, $\pi(10.5) = 4$ since $2,3,5,7 \leq 10.5$.
The distribution of the primes appears somewhat erratic on the small scale.  For example,
we believe that there are infinitely many twin primes; that is, primes like $29$ and $31$
which differ by $2$ (this is the famed twin prime conjecture).  
On the other hand, there are arbitrarily large gaps between primes:
$n!+2, n!+3,\ldots,n!+n$ is a string of $n-1$ \emph{composite} (non-prime) numbers
since $n!+k$ is divisible by $k$
for $k=1,2,\ldots,n$.

The following landmark result is one of the
crowning achievements of human thought.  Although first conjectured by Legendre \cite{Legendre} around 1798 and perhaps a few years earlier by the young Gauss, 
it was proved independently by Hadamard \cite{Hadamard} and de la Vall\'ee Poussin in 1896 \cite{DVP}
with methods from complex analysis, building upon the seminal 1859 paper of Riemann \cite{RiemannWork}
(these historical papers are reprinted in the wonderful volume \cite{Borwein}).

\begin{theorem}[Prime Number Theorem]
\begin{equation*}
\pi(x) \sim \Li(x),
\end{equation*}
in which
\begin{equation*}
\Li(x) = \int_2^x \frac{dt}{\log t}
\end{equation*}
is the \emph{logarithmic integral}.
\end{theorem}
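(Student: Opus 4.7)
The plan is to follow the Zagier--Newman route outlined in the introduction, reducing the prime number theorem to an asymptotic statement about Chebyshev's function $\vartheta(x) = \sum_{p\leq x}\log p$ and extracting that asymptotic from complex-analytic information about the Riemann zeta function. First I would develop the analytic side: define $\zeta(s) = \sum_{n=1}^{\infty} n^{-s}$ on $\Re s > 1$, prove the Euler product $\zeta(s) = \prod_{p}(1-p^{-s})^{-1}$ (an almost immediate consequence of unique factorization), and meromorphically continue $\zeta$ to $\Re s > 0$ with a simple pole of residue $1$ at $s = 1$. The crucial preparatory step is the nonvanishing of $\zeta$ on the line $\Re s = 1$, most cleanly shown via the classical trick of combining the inequality
\begin{equation*}
3 + 4\cos\theta + \cos 2\theta \geq 0
\end{equation*}
with the product $\zeta(\sigma)^3 \zeta(\sigma+it)^4 \zeta(\sigma+2it)$ as $\sigma \to 1^+$. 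This nonvanishing is precisely what allows the function
\begin{equation*}
\Phi(s) = \sum_p \frac{\log p}{p^s},
\end{equation*}
essentially $-\zeta'/\zeta$ stripped of harmless prime-power contributions, to extend analytically across $\Re s = 1$ except for a simple pole at $s = 1$.

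Next I would combine this with two arithmetic ingredients. An elementary Chebyshev-type bound, obtained by estimating $\binom{2n}{n}$, gives $\vartheta(x) = O(x)$, so the function $\vartheta(e^t)e^{-t} - 1$ is bounded on $[0,\infty)$. Computing its Laplace transform (via the substitution $x = e^t$ and a short Abel summation) expresses it in terms of $\Phi(s+1)/(s+1) - 1/s$, which, thanks to the previous paragraph, extends analytically to an open neighborhood of $\Re s \geq 0$. Newman's Tauberian theorem then yields convergence of the improper integral
\begin{equation*}
\int_1^{\infty} \frac{\vartheta(x) - x}{x^2}\, dx,
\end{equation*}
and a short monotonicity argument (exploiting that $\vartheta$ is nondecreasing) upgrades this into $\vartheta(x) \sim x$. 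From $\vartheta(x) \sim x$ the sandwich $\vartheta(x) \leq \pi(x)\log x$ together with $\vartheta(x) \geq (1-\varepsilon)(\log x)\bigl(\pi(x) - \pi(x^{1-\varepsilon})\bigr)$ forces $\pi(x) \sim x/\log x$, and a routine L'H\^opital computation gives $\Li(x) \sim x/\log x$, completing the proof that $\pi(x) \sim \Li(x)$.

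The main obstacle is Newman's Tauberian theorem itself, which is the complex-analytic heart of the whole argument. Its proof hinges on replacing the naive vertical line of integration with a carefully chosen contour that bulges into the half-plane where $\Phi$ remains analytic, combined with an auxiliary kernel of the form $e^{sT}(1 + s^2/R^2)$ whose purpose is to force the error pieces to vanish in the appropriate limits. Justifying the contour deformation via the analytic extension established earlier, and bounding each piece of the deformed contour, is where essentially all of the technical work and pedagogical risk lives. The nonvanishing of $\zeta$ on $\Re s = 1$ runs a close second in subtlety: the fact that one elementary trigonometric inequality silently does all of the work is both delightful and easy to under-appreciate, so both steps deserve the lion's share of lecture time.
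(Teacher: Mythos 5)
Your proposal is correct and follows essentially the same route as the paper: the Zagier--Newman argument via the Euler product, nonvanishing of $\zeta$ on $\Re s = 1$ through the $3+4\cos\theta+\cos 2\theta$ inequality, the Chebyshev bound $\vartheta(x)=O(x)$, the Laplace representation leading to $\Phi(s+1)/(s+1)-1/s$, Newman's Tauberian theorem, the monotonicity argument giving $\vartheta(x)\sim x$, the sandwich yielding $\pi(x)\sim x/\log x$, and finally $\Li(x)\sim x/\log x$ by L'H\^opital. No substantive differences from the paper's proof.
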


\begin{figure}
    \centering
    \begin{subfigure}[t]{0.45\textwidth}
        \centering
        \includegraphics[width=\textwidth]{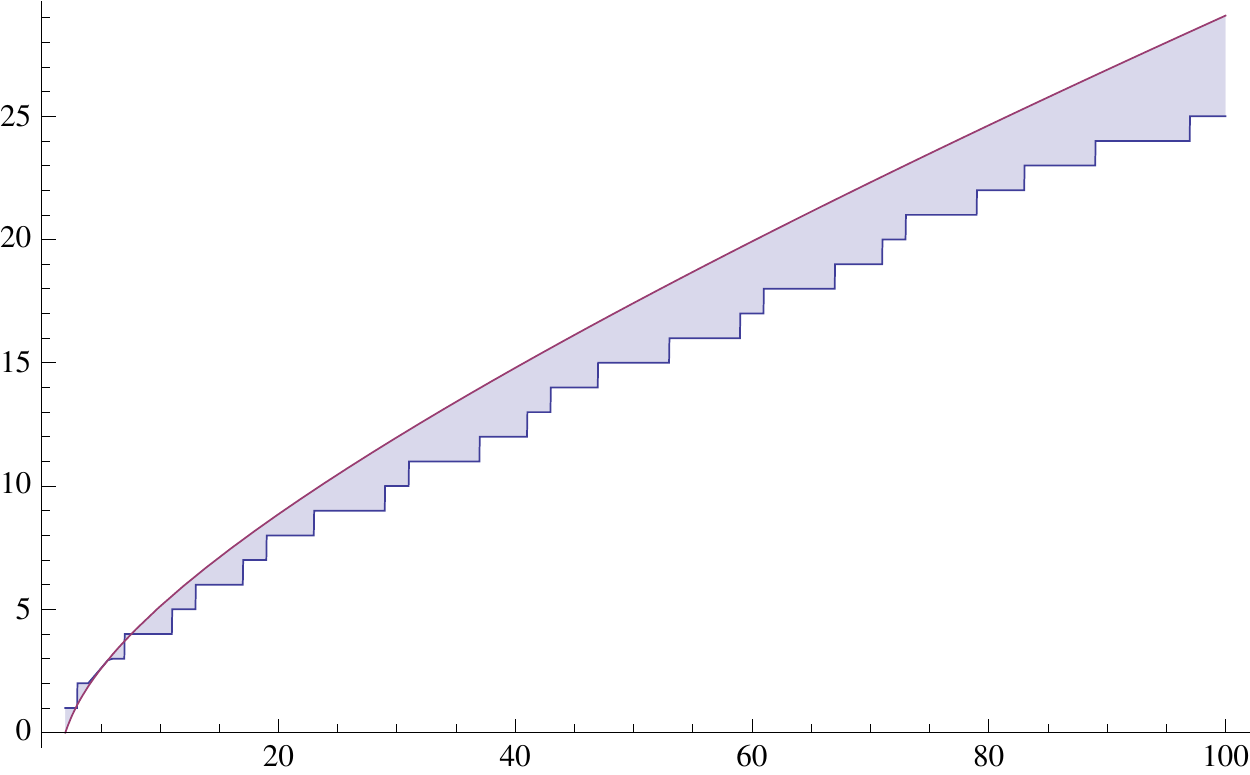}
        \caption{$x \leq 100$}
    \end{subfigure}\quad
    \begin{subfigure}[t]{0.45\textwidth}
        \centering
        \includegraphics[width=\textwidth]{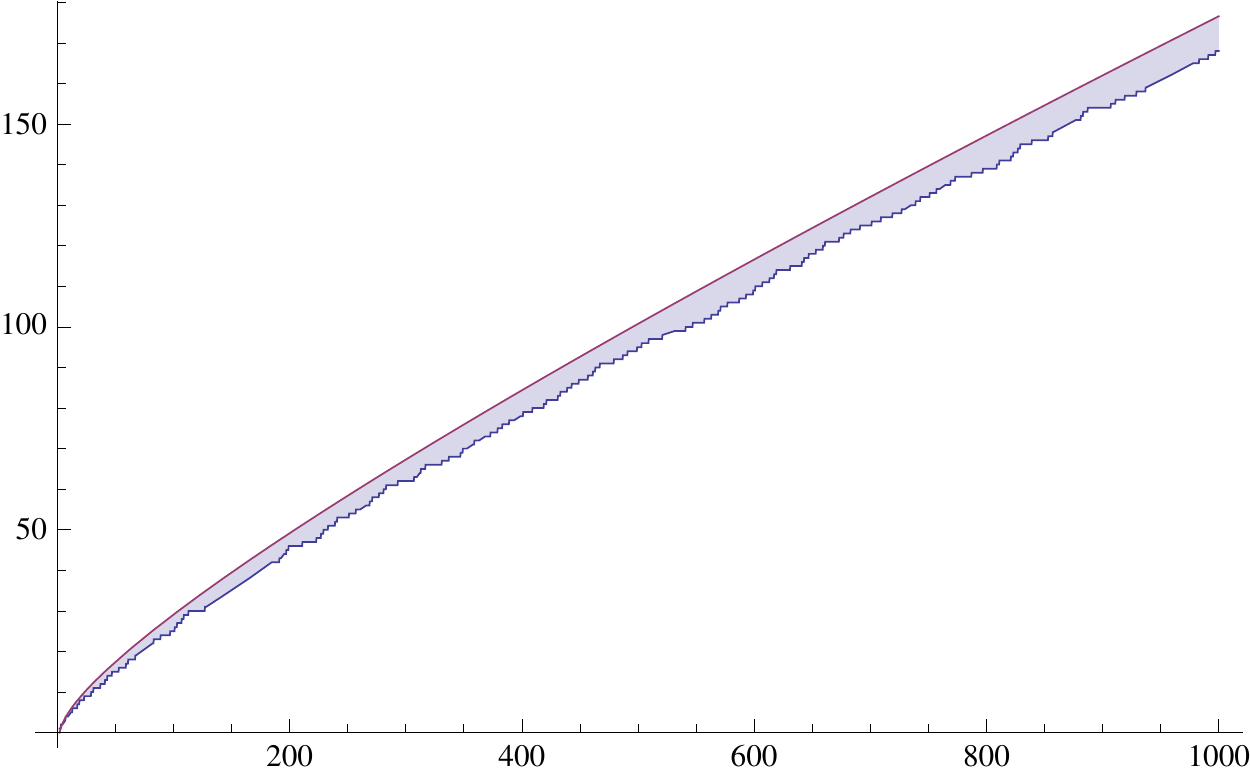}
        \caption{$x \leq 1{,}000$}
    \end{subfigure}
\\
    \begin{subfigure}[t]{0.45\textwidth}
        \centering
        \includegraphics[width=\textwidth]{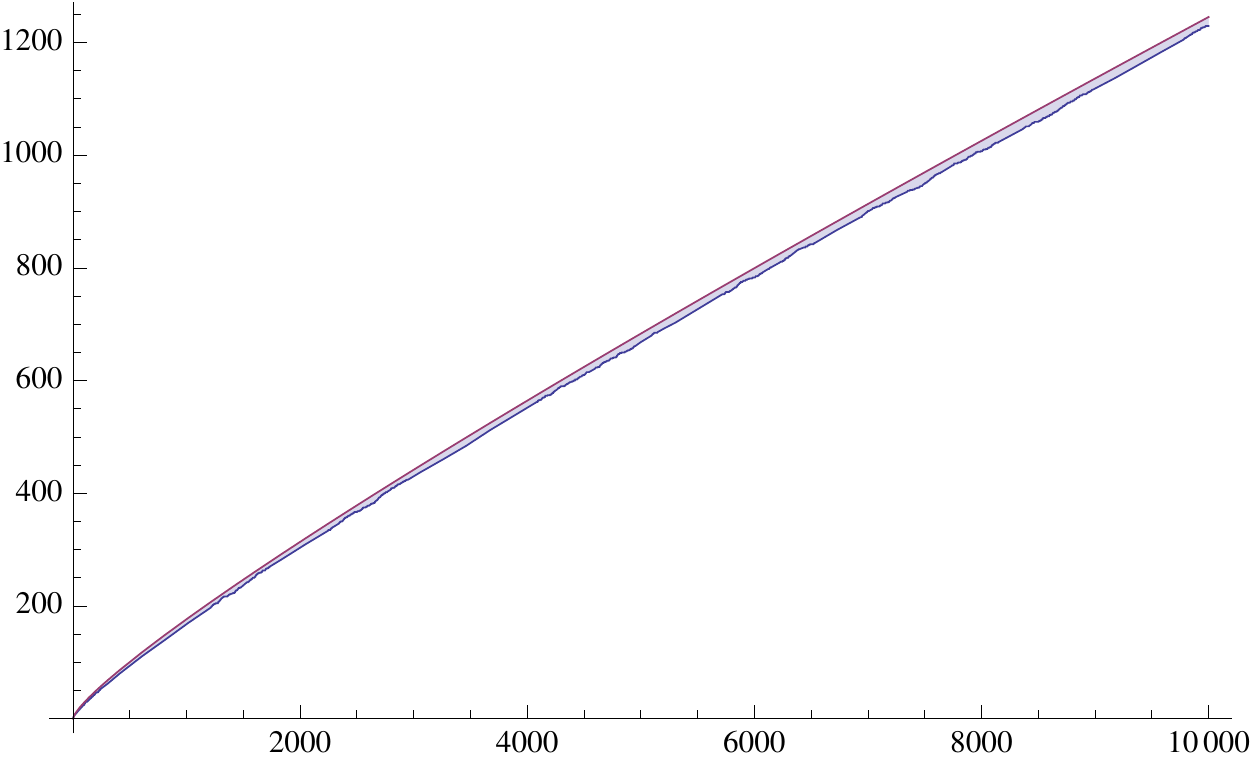}
        \caption{$x \leq 10{,}000$}
    \end{subfigure}\quad
    \begin{subfigure}[t]{0.45\textwidth}
        \centering
        \includegraphics[width=\textwidth]{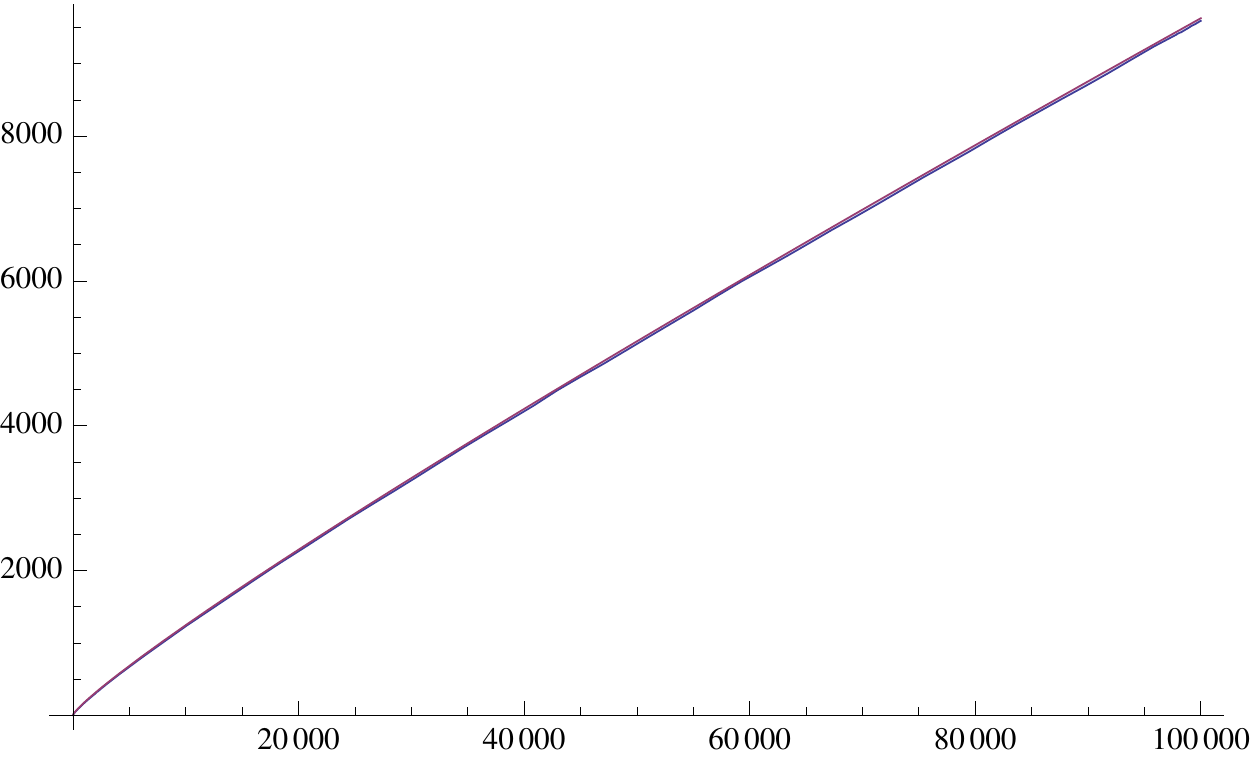}
        \caption{$x \leq 100{,}000$}
    \end{subfigure}
    \caption{Graphs of $\operatorname{Li}(x)$ versus $\pi(x)$ on various scales. }
    \label{Figure:PNT_Li}
\end{figure}

The predictions afforded by the prime number theorem are astounding; 
see Figure \ref{Figure:PNT_Li}.
Unfortunately, $\Li(x)$ cannot be evaluated in closed form.
As a consequence, it is convenient to replace $\Li(x)$ 
with a simpler function that is asymptotically equivalent to it.
L'H\^opital's rule and the fundamental theorem of calculus imply that
\begin{equation*}
\lim_{x\to\infty} \frac{  \Li(x) }{x/\log x}
\overset{L}{=} 
\lim_{x\to\infty} \frac{ \frac{1}{\log x}}{ \frac{\log x - x(\frac{1}{x})}{(\log x)^2}}
= \lim_{x\to\infty}  \frac{1}{1 - \frac{1}{\log x}} 
=1
\end{equation*}
and hence
\begin{equation*}
\Li(x) \sim \frac{x}{\log x}.
\end{equation*}
However, the logarithmic integral provides a better approximation to $\pi(x)$;
see Table \ref{Table:LogInt}.  

\begin{table}\small
    \begin{tabular}{c|ccc}
    $x$ & $\pi(x)$ & $\operatorname{Li}(x)$ & $x / \log x$ \\
    \hline
    1000 & 168 & 177 & 145 \\
    10{,}000 & 1{,}229 &  1{,}245 & 1{,}086 \\
    100{,}000 &  9{,}592 & 9{,}629 & 8{,}686 \\
    1{,}000{,}000 & 78{,}498 & 78{,}627 & 72{,}382\\
    10{,}000{,}000 & 664{,}579 & 664{,}917 & 620{,}421 \\
    100{,}000{,}000 & 5{,}761{,}455 & 5{,}762{,}208 & 5{,}428{,}681 \\
    1{,}000{,}000{,}000 & 50{,}847{,}534 & 50{,}849{,}234 & 48{,}254{,}942\\
    10{,}000{,}000{,}000 & 455{,}052{,}511 & 455{,}055{,}614 & 434{,}294{,}482 \\
    100{,}000{,}000{,}000 & 4{,}118{,}054{,}813 & 4{,}118{,}066{,}400 &  3{,}948{,}131{,}654 \\
    1{,}000{,}000{,}000{,}000 & 37{,}607{,}912{,}018 & 37{,}607{,}950{,}280 & 36{,}191{,}206{,}825 \\
    \end{tabular}
    \smallskip
    \caption{The logarithmic integral $\operatorname{Li}(x)$ is a better approximation to the prime counting function
    $\pi(x)$ than is $x / \log x$ (entries rounded to the nearest integer).}
    \label{Table:LogInt}
\end{table}

We will prove the prime number theorem in the following equivalent form.

\begin{theorem}[Prime Number Theorem]\label{Theorem:PNT}
$\displaystyle \pi(x) \sim \frac{x}{\log x}$.
\end{theorem}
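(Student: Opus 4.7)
My plan is to reduce the Prime Number Theorem to the asymptotic $\vartheta(x) \sim x$ for Chebyshev's function $\vartheta(x) = \sum_{p \le x} \log p$, and then to establish that asymptotic by the complex-analytic route pioneered by Newman and streamlined by Zagier.

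The reduction to $\vartheta(x) \sim x$ is a short squeeze. From the definition, $\vartheta(x) \le (\log x)\pi(x)$; in the other direction, for any fixed $\epsilon \in (0,1)$,
\begin{equation*}
\vartheta(x) \ge \sum_{x^{1-\epsilon} < p \le x} \log p \ge (1-\epsilon)(\log x)\bigl(\pi(x) - \pi(x^{1-\epsilon})\bigr) \ge (1-\epsilon)(\log x)\bigl(\pi(x) - x^{1-\epsilon}\bigr),
\end{equation*}
since $\pi(y) \le y$ trivially. Dividing by $x$ and taking $\limsup$ and $\liminf$ as $x\to\infty$, and using $\vartheta(x)/x \to 1$, traps both limits between $1$ and $1/(1-\epsilon)$. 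Letting $\epsilon \to 0^+$ forces $\pi(x)\log x/x \to 1$, which is exactly the theorem.

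To obtain $\vartheta(x) \sim x$ I would proceed in three stages. First, prove a Chebyshev-type bound $\vartheta(x) = O(x)$ by a binomial-coefficient argument: every prime $p$ with $n < p \le 2n$ divides $\binom{2n}{n} \le 4^n$, so $\vartheta(2n) - \vartheta(n) \le n\log 4$, and iteration gives the bound. Second, develop the analytic input: the Riemann zeta function $\zeta(s) = \sum n^{-s} = \prod_p (1-p^{-s})^{-1}$ extends meromorphically to $\Re s > 0$ with a single simple pole at $s=1$ of residue $1$, and $\zeta(s) \ne 0$ on the line $\Re s = 1$; consequently the auxiliary function $\Phi(s) = \sum_p (\log p)/p^s$ has the property that $\Phi(s) - 1/(s-1)$ extends analytically to an open neighborhood of the closed half-plane $\Re s \ge 1$. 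Third, apply Newman's Tauberian theorem to the function $f(t) = \vartheta(e^t)e^{-t} - 1$, whose Laplace transform is expressible in terms of $\Phi$, to conclude that the improper integral $\int_1^\infty (\vartheta(x)-x)/x^2\,dx$ converges. A short monotonicity argument then upgrades convergence to an asymptotic: if $\vartheta(x_0) \ge \lambda x_0$ for some $\lambda > 1$ along a sequence of $x_0 \to \infty$, then on each interval $[x_0,\lambda x_0]$ the integrand is bounded below by a positive constant depending only on $\lambda$, contradicting convergence; the symmetric argument with $\lambda < 1$ handles the lower bound.

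The hardest step is Newman's Tauberian theorem itself. Its proof is a delicate contour-integral argument: one represents the difference between a truncated integral and the target value as an integral of the Laplace transform against $e^{zT}(1 + z^2/R^2)/z$ over the boundary of a truncated right half-disk of radius $R$, splits the contour into the arc in $\Re z > 0$, the arc in $\Re z < 0$ (where the analytic extension of $\Phi - 1/(s-1)$ is used), and a vertical segment, and bounds each piece using Cauchy's theorem together with the boundedness of $\vartheta(x)/x$. The characteristic Newman weight $1 + z^2/R^2$ is precisely what makes the bound on the outer arc collapse as $T \to \infty$. This is the complex-analytic centerpiece of the proof and the step where the greatest care is needed; every other part of the argument is either elementary or a routine application of standard complex-analytic facts about $\zeta(s)$.
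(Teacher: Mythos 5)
Your proposal is correct and follows essentially the same route as the paper: the Newman--Zagier argument (Chebyshev's bound, analytic continuation and nonvanishing of $\zeta$ on $\Re s = 1$, the extension of $\Phi(s) - \frac{1}{s-1}$, Newman's Tauberian theorem applied to $f(t) = \vartheta(e^t)e^{-t}-1$, convergence of $\int_1^\infty \frac{\vartheta(x)-x}{x^2}\,dx$, and the monotonicity argument giving $\vartheta(x)\sim x$), followed by the same squeeze reduction from $\vartheta(x)\sim x$ to $\pi(x)\sim x/\log x$. No gaps.
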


Our proof incorporates modern simplifications due to Newman \cite{Newman} and Zagier \cite{Zagier}.  
However, the proof is still difficult and involves most of the techniques and tools from a
typical complex analysis course.  There is little number theory in the proof; it is almost all complex analysis.
Consequently, it is an eminently
fitting capstone for a complex analysis course.
As G.H.~Hardy opined in 1921 \cite{Littlewood}:
\begin{quote}\small
No elementary proof of the prime number theorem is known, and one may ask whether it is reasonable to expect one. Now we know that the theorem is roughly equivalent to a theorem about an analytic function, the theorem that Riemann's zeta function has no roots on a certain line. A proof of such a theorem, not fundamentally dependent on the theory of functions [complex analysis], seems to me extraordinarily unlikely.
\end{quote}

In 1948  Erd\H{o}s \cite{Erdos} and Selberg \cite{Selberg} independently found proofs of the prime number theorem that avoid complex analysis.  
These ``elementary'' proofs are more difficult and intricate than the approach presented here; see \cite{Diamond, Tenenbaum, HardyWright, Levinson, Edwards} for the details
and \cite{Goldfeld, Spencer} for an account of the murky history of the elementary proof.

\begin{remark}
A common misconception is that $f(x) \sim g(x)$ implies that 
$f(x) - g(x)$ tends to zero, or that it remains small.  The functions
$f(x) = x^2 + x$ and $g(x) = x^2$ are asymptotically equivalent, yet
their difference is unbounded.
\end{remark}

\begin{remark}
The prime number theorem implies that $p_n \sim n \log n$, in which $p_n$ denotes the $n$th prime number.
Since $\pi(p_n) = n$, substitute $q = p_n$ and obtain
\begin{align*}
\lim_{n\to\infty} \frac{n \log n}{p_n}
&=  \lim_{n\to\infty}  \left( \frac{\pi(p_n) \log p_n}{p_n} \right) \left( \frac{\log n}{\log p_n} \right)
=  \lim_{n\to\infty}  \frac{\log n}{\log p_n} \\
&=  \lim_{q\to\infty}  \frac{\log \pi(q)}{\log q} 
=  \lim_{q\to\infty}  \frac{\log \left( \frac{\pi(q)\log q}{q} \right) + \log q - \log\log q}{\log q} \\
&=  \lim_{q\to\infty} \left( \frac{\log 1}{\log q} + 1 - \frac{\log \log q}{\log q} \right) 
=  1. 
\end{align*}
\end{remark}

\begin{remark}
Another simple consequence of the prime number theorem is the density of 
$\{ p/q : \text{$p,q$ prime}\}$ in $[0,\infty)$ \cite{QSDE}.
\end{remark}

\section{The Riemann zeta function}\label{Section:RiemannZeta}
    The \emph{Riemann zeta function} is defined by 
    \begin{equation}\label{eq:Zeta}\qquad
        \zeta(s) = \sum_{n=1}^{\infty} \frac{1}{n^s},\quad \text{for $\Re s > 1$}.
    \end{equation}
    The use of $s$ for a complex variable is standard in analytic number theory, and we largely adhere to this convention.    
    Suppose that $\Re s \geq \sigma > 1$.  Since 
    \begin{equation*}
        |n^s| = |e^{s \log n}| = e^{\Re (s \log n)} = e^{(\log n)\Re s} 
        = (e^{\log n})^{\Re s} = n^{\Re s} \geq n^{\sigma}
    \end{equation*}
    it follows that
    \begin{equation*}
        \left| \sum_{n=1}^{\infty} \frac{1}{n^s} \right| \leq \sum_{n=1}^{\infty} \frac{1}{n^{\sigma}} < \infty.
    \end{equation*}
    The Weierstrass $M$-test ensures that \eqref{eq:Zeta} converges 
    absolutely and uniformly on $\Re s \geq \sigma$.  
    Since $\sigma > 1$ is arbitrary and each summand in \eqref{eq:Zeta} is analytic on $\Re s > 1$, we conclude that \eqref{eq:Zeta} converges
    locally uniformly on $\Re s > 1$ to an analytic function.

    In what follows, $p$ denotes a prime number and a sum or product indexed by $p$
    runs over the prime numbers.
    Here is the connection between the zeta function and the primes.
    
    \begin{theorem}[Euler Product Formula]\label{Theorem:EulerProduct}
        If $\Re s > 1$, then $\zeta(s) \neq 0$ and 
        \begin{equation}\label{eq:EulerProduct}
        	\zeta(s) =\
        	\prod_{p} \bigg( 1 - \frac{1}{p^s} \bigg)^{-1}.
        \end{equation}
        The convergence is locally uniform in $\Re s > 1$.
    \end{theorem}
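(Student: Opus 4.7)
The plan is to prove both assertions simultaneously by the classical Euler sieve: expand each factor as a geometric series and multiply $\zeta(s)$ by the finite truncation $\prod_{p \le N}(1 - 1/p^s)$, then let $N \to \infty$. For $\Re s > 1$ and each prime $p$, the estimate $|1/p^s| = 1/p^{\Re s} < 1$ yields the absolutely convergent expansion
\[
\Bigl(1 - \tfrac{1}{p^s}\Bigr)^{-1} = \sum_{k=0}^{\infty} \frac{1}{p^{ks}}.
\]

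The heart of the argument is the identity
\[
\zeta(s)\prod_{p \le N}\!\Bigl(1 - \tfrac{1}{p^s}\Bigr) \;=\; 1 \;+\; \sum_{\substack{n > 1 \\ p \mid n \,\Rightarrow\, p > N}} \frac{1}{n^s},
\]
which I would prove by induction on the number of primes $\le N$. The base case is
\[
\zeta(s)\Bigl(1 - \tfrac{1}{p^s}\Bigr) = \sum_{n \ge 1}\frac{1}{n^s} - \sum_{n \ge 1}\frac{1}{(pn)^s} = \sum_{p \,\nmid\, n} \frac{1}{n^s},
\]
the rearrangement being justified by absolute convergence on $\Re s > 1$. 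Each successive multiplication by $(1 - 1/q^s)$ sieves out the remaining terms whose index is divisible by $q$; the fundamental theorem of arithmetic guarantees that each integer is removed exactly once, so what survives is precisely the set of integers with every prime factor exceeding $N$. Since the smallest such integer greater than $1$ is itself $> N$, writing $\sigma = \Re s$ gives
\[
\left| \zeta(s)\prod_{p \le N}\!\Bigl(1 - \tfrac{1}{p^s}\Bigr) - 1 \right| \;\le\; \sum_{n > N} \frac{1}{n^{\sigma}}.
\]

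Three conclusions follow from this single estimate. First, on any half-plane $\Re s \ge \sigma_0 > 1$ the bound $\sum_{n > N} 1/n^{\sigma_0}$ is independent of $s$ and tends to $0$, so $\zeta(s)\prod_{p \le N}(1 - 1/p^s) \to 1$ uniformly, hence locally uniformly on $\Re s > 1$. Second, since the limit is nonzero, for all sufficiently large $N$ both factors on the left must be nonzero; in particular $\zeta(s) \neq 0$. Third, dividing through by the now-nonzero product yields \eqref{eq:EulerProduct} together with its locally uniform convergence.

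The main (indeed only) obstacle is the sieve identity itself, whose content is number-theoretic: one must verify carefully, using unique factorization together with the absolute convergence that justifies rearranging double series, that $\zeta(s)\prod_{p \le N}(1 - 1/p^s)$ collects exactly those $1/n^s$ with every prime factor of $n$ strictly greater than $N$. The remaining complex-analytic input is a routine tail comparison against the convergent series $\sum 1/n^{\sigma_0}$.
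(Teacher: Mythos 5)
Your argument is correct in outline, but it takes the route the paper deliberately avoids: you prove the identity $\zeta(s)\prod_{p\le N}\bigl(1-p^{-s}\bigr)\to 1$ (locally uniformly), read off the nonvanishing of $\zeta$ from it, and then invert. The paper instead runs the sieve directly on the partial products $\prod_{p\le N}\bigl(1-p^{-s}\bigr)^{-1}$, expanding each factor as a geometric series and multiplying, which gives $\bigl|\zeta(s)-\prod_{p\le N}(1-p^{-s})^{-1}\bigr|\le\sum_{n>N}n^{-\sigma}$ and hence \eqref{eq:EulerProduct} with locally uniform convergence in one stroke; it then has to invoke Hurwitz's theorem to get $\zeta(s)\ne 0$, since convergence of a product of nonvanishing factors does not by itself force a nonvanishing limit. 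Your version buys the nonvanishing for free (no Hurwitz needed), which is a genuine simplification, and the paper's own remark about the identity \eqref{eq:SDHG} acknowledges exactly this trade-off.

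The trade-off, however, is the one point you pass over too quickly: the paper explicitly warns that a separate argument is needed to deduce the locally uniform convergence of \eqref{eq:EulerProduct} from that of \eqref{eq:SDHG}, and your final sentence (``dividing through \dots yields \dots its locally uniform convergence'') asserts rather than proves it. The gap is easily closed, but it should be closed: writing $P_N(s)=\prod_{p\le N}\bigl(1-p^{-s}\bigr)$, on a compact $K\subset\{\Re s>1\}$ you have
\begin{equation*}
\frac{1}{P_N(s)}-\zeta(s)=\frac{\zeta(s)\bigl(1-\zeta(s)P_N(s)\bigr)}{\zeta(s)P_N(s)},
\end{equation*}
and since $\zeta(s)P_N(s)\to 1$ uniformly on $K$ you get $|\zeta(s)P_N(s)|\ge\tfrac12$ for all large $N$, while $|\zeta(s)|\le C_K$ by continuity; hence $\bigl|P_N(s)^{-1}-\zeta(s)\bigr|\le 2C_K\,\bigl|1-\zeta(s)P_N(s)\bigr|\to 0$ uniformly on $K$. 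With that line added, your proof is complete and is a legitimate alternative to the paper's.
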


\begin{proof}
Since $|p^{-s}| = p^{-\Re s} < 1$ for $\Re s > 1$, the geometric series formula implies
\begin{equation*}
\bigg( 1 - \frac{1}{p^s} \bigg)^{-1} = \sum_{n=0}^{\infty} \bigg(\frac{1}{p^s}\bigg)^n = \sum_{n=0}^{\infty} \frac{1}{p^{ns}},
\end{equation*}
in which the convergence is absolute.  Since a finite number of absolutely convergent series can be multiplied term-by-term, it follows that
\begin{align*}
\bigg( 1 - \frac{1}{2^s} \bigg)^{-1}\bigg( 1 - \frac{1}{3^s} \bigg)^{-1}
&=\left(1 + \frac{1}{2^s} + \frac{1}{2^{2s}} + \cdots \right)\left(1 + \frac{1}{3^s} + \frac{1}{3^{2s}} + \cdots \right) \\
&=1 + \frac{1}{2^s} + \frac{1}{3^s} + \frac{1}{4^s}+ \frac{1}{6^s} + \frac{1}{8^s} + \frac{1}{9^s} + \frac{1}{12^s} + \cdots,
\end{align*}
in which only natural numbers divisible by the primes $2$ or $3$ appear.
Similarly,
\begin{align*}
\prod_{p\leq 5} \bigg( 1 - \frac{1}{p^s} \bigg)^{-1}
&=\left(1 + \frac{1}{2^s} + \frac{1}{3^s} + \frac{1}{4^s}+ \frac{1}{6^s} + \frac{1}{8^s}  + \cdots\right)\left(1 + \frac{1}{5^s}+ \frac{1}{5^{2s}}+\cdots\right) \\
&=1 + \frac{1}{2^s} + \frac{1}{3^s} + \frac{1}{4^s}+ \frac{1}{5^s}+ \frac{1}{6^s} + \frac{1}{8^s} + \frac{1}{9^s} + \frac{1}{10^s} + \frac{1}{12^s} + \cdots,
\end{align*}
in which only natural numbers divisible by the primes $2$, $3$, or $5$ appear.
Since the prime factors of each $n \leq N$ are at most $N$,
and because the tail of a convergent series tends to $0$, it follows that
for $\Re s \geq \sigma > 1$
\begin{equation*}
\left| \zeta(s) - \prod_{p\leq N} \bigg( 1 - \frac{1}{p^s} \bigg)^{-1}\right|
\leq \sum_{n > N} \left| \frac{1}{n^s} \right| 
\leq  \sum_{n=N}^{\infty} \frac{1}{n^{\sigma}} \to 0
\end{equation*}
as $N \to\infty$.  This establishes \eqref{eq:EulerProduct} and proves
that the convergence is locally uniform on $\Re s > 1$.  Since each partial product does not vanish on $\Re s >1$
and because the limit $\zeta(s)$ is not identically zero, 
Hurwitz' theorem\footnote{Let $\Omega \subseteq \C$ be nonempty, connected, and open, and let $f_n$ be a sequence of analytic functions that converges locally uniformly on $\Omega$ to $f$ (which is necessarily analytic). 
If each $f_n$ is nonvanishing on $\Omega$, then $f$ is either identically zero or nowhere vanishing on $\Omega$.}
ensures that $\zeta(s) \neq 0$ for $\Re s > 1$.
\end{proof}

\begin{remark}
By $\prod_{p} ( 1 - p^{-s})^{-1}$ we mean $\lim_{N\to\infty} \prod_{p\leq N}( 1 - p^{-s})^{-1}$.
This definition is sufficient for our purposes, but differs from the general definition of infinite products
(in terms of logarithms)
one might see in advanced complex-variables texts.
\end{remark}

\begin{remark}
The convergence of $\prod_{p}( 1 - p^{-s})^{-1}$ and the nonvanishing of each factor 
does not automatically imply that the infinite product is nonvanishing
(this is frequently glossed over).
Indeed, $\lim_{N\to\infty} \prod_{n=1}^N \frac{1}{2} = \frac{1}{2^N} = 0$
even though each factor is nonzero.  Thus, the appeal to Hurwitz' theorem is necessary
unless another approach is taken.
\end{remark}

\begin{remark}
A similar argument establishes
\begin{equation}\label{eq:SDHG}
\zeta(s) \prod_p \left( 1 - \frac{1}{p^s} \right) = 1,
\end{equation}
in which the convergence is locally uniform on $\Re s > 1$.  This directly yields 
the nonvanishing of $\zeta(s)$ on $\Re s > 1$.  
However, a separate argument is needed to deduce the locally uniform convergence of \eqref{eq:EulerProduct}
from the locally uniform convergence of \eqref{eq:SDHG}.
\end{remark}

\begin{remark}
The Euler product formula implies Euclid's theorem (the infinitude of the primes).
If there were only finitely many primes, then the right-hand side of \eqref{eq:EulerProduct} would
converge to a finite limit as $s \to 1^+$.  However, the left-hand side of \eqref{eq:Zeta}
diverges as $s \to 1^+$ since its terms tend to those of the harmonic series.
\end{remark}

\section{Analytic Continuation of the Zeta Function}\label{Section:Continue}
We now prove that the Riemann zeta function can be analytically continued to $\Re s >0$, with the exception
of $s=1$, where $\zeta(s)$ has a simple pole.  Although much more can be said about this matter, this modest
result is sufficient for our purposes.  On the other hand, the instructor might wish to supplement this 
material with some remarks on the Riemann Hypothesis; see Remark \ref{Remark:RH}.  Students perk up at the mention of the
large monetary prize associated to the problem.
At the very least, they may wish to learn about the most important
open problem in mathematics.

\begin{theorem}\label{Theorem:ZetaExtend}
    $\zeta(s)$ can be analytically continued to $\Re s > 0$ except for
    a simple pole at $s=1$ with residue $1$.
\end{theorem}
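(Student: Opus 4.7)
The plan is to explicitly display $\zeta(s)$ as $\frac{1}{s-1}$ plus an error series that converges locally uniformly on the larger half-plane $\Re s > 0$. For $\Re s > 1$ the identity $\frac{1}{s-1} = \int_1^\infty x^{-s}\,dx = \sum_{n=1}^\infty \int_n^{n+1} x^{-s}\,dx$ holds, so we may write
\begin{equation*}
\zeta(s) - \frac{1}{s-1} \;=\; \sum_{n=1}^\infty a_n(s), \qquad a_n(s) := \int_n^{n+1}\!\left(\frac{1}{n^s} - \frac{1}{x^s}\right) dx.
\end{equation*}
The strategy is to show that the series on the right defines an analytic function on $\Re s > 0$; once this is done the theorem follows, since the function $\frac{1}{s-1}$ is meromorphic on $\C$ with a simple pole of residue $1$ at $s=1$.

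The central step is the estimate on each $a_n(s)$. Since $\frac{d}{dt}t^{-s} = -s\,t^{-s-1}$, the fundamental theorem of calculus gives, for $n\le x\le n+1$,
\begin{equation*}
\frac{1}{n^s} - \frac{1}{x^s} \;=\; \int_n^x \frac{s}{t^{s+1}}\,dt,
\end{equation*}
and bounding $|t^{-s-1}| = t^{-\Re s -1} \le n^{-\Re s - 1}$ on this interval produces $|a_n(s)| \le |s|/n^{\Re s + 1}$. On any compact set $K \subset \{\Re s > 0\}$ there exist constants $M$ and $\delta>0$ with $|s|\le M$ and $\Re s\ge \delta$ for all $s\in K$, so $|a_n(s)| \le M\cdot n^{-1-\delta}$, whose sum converges. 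The Weierstrass $M$-test then gives locally uniform convergence, and because each $a_n$ is analytic on $\Re s > 0$ the sum $A(s) := \sum_{n=1}^\infty a_n(s)$ is analytic on $\Re s > 0$.

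To finish, I would note that on the open half-plane $\Re s > 1$ the identity $\zeta(s) = \frac{1}{s-1} + A(s)$ holds by construction, and the right-hand side is meromorphic on $\Re s > 0$ with its only singularity a simple pole at $s=1$ of residue $1$. By the identity theorem this right-hand side is \emph{the} analytic continuation of $\zeta(s)$ to $\Re s > 0$. The main obstacle is really just arranging the decomposition so that the pole is isolated cleanly from the rest; the analytic estimates are routine once the derivative bound on $t^{-s}$ is in hand. One subtlety worth flagging to students is that the manipulation $\frac{1}{s-1} = \sum_n \int_n^{n+1} x^{-s}\,dx$ is only valid a priori for $\Re s > 1$, and the whole point of the construction is that the \emph{rearranged} series $\sum a_n(s)$ extends past this boundary even though the individual pieces $\frac{1}{n^s}$ and $\int_n^{n+1} x^{-s}\,dx$ do not sum absolutely on their own for $0 < \Re s \le 1$.
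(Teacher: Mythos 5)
Your proof is correct, and it takes a genuinely different (though closely related) route from the paper. The paper first uses Abel summation to write $\zeta(s) = s\int_1^\infty \lfloor x\rfloor x^{-s-1}\,dx$ for $\Re s > 2$, then subtracts $s\int_1^\infty x\cdot x^{-s-1}\,dx = \frac{1}{s-1}+1$ and shows that the fractional-part integral $\int_1^\infty \frac{x-\lfloor x\rfloor}{x^{s+1}}\,dx$ is analytic on $\Re s>0$ by breaking it into pieces over $[n,n+1]$, proving each piece analytic via Fubini and Morera, and applying the Weierstrass $M$-test on half planes $\Re s \ge \sigma$. You instead compare each term $n^{-s}$ directly with $\int_n^{n+1}x^{-s}\,dx$, i.e.\ a first-order Euler--Maclaurin decomposition; the two are linked by an integration by parts on each interval, but yours avoids the floor function and the improper-integral bookkeeping entirely, since every $a_n$ is an integral over a compact interval. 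Your bound $|a_n(s)|\le |s|\,n^{-\Re s-1}$ involves $|s|$, so local uniformity is phrased on compact sets $K$ with $|s|\le M$, $\Re s\ge\delta$, rather than on half planes --- equally valid, just a different packaging. The one point you should make explicit at the paper's level of detail is the analyticity of each $a_n$ on $\Re s>0$: either quote the explicit antiderivative $\int_n^{n+1}x^{-s}\,dx = \frac{n^{1-s}-(n+1)^{1-s}}{s-1}$ (with its removable singularity at $s=1$), or run the same Fubini--Morera argument the paper uses for its $f_n$; this is easier in your setting precisely because the interval is compact, which is also what spares you the paper's cautionary remark about justifying analyticity of integrals over an infinite domain without Lebesgue theory. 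Your closing observation --- that the termwise splitting $\zeta(s)-\frac{1}{s-1}=\sum_n a_n(s)$ is only justified a priori for $\Re s>1$, where both constituent series converge, while the combined series survives down to $\Re s>0$ --- is exactly the right subtlety to flag, and the identity-theorem conclusion with the simple pole of residue $1$ at $s=1$ is stated correctly.
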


\begin{proof}
In what follows, $\floor{x}$ denotes the unique
integer such that $\floor{x} \leq x < \floor{x}+1$; in particular, $0 \leq x - \floor{x} < 1$.
For $\Re s > 2$,\footnote{The assumption $\Re s > 2$ ensures that both
$\sum_{n=1}^{\infty} \frac{n}{n^s}$ and $\sum_{n=1}^{\infty} \frac{n-1}{n^s}$ converge locally uniformly.}%
\begin{align*}
\zeta(s)
&= \sum_{n=1}^{\infty} \frac{1}{n^s} 
= \sum_{n=1}^{\infty} \frac{n-(n-1)}{n^s} \\
&= \sum_{n=1}^{\infty} \frac{n}{n^s} - \sum_{n=2}^{\infty} \frac{n-1}{n^s}
= \sum_{n=1}^{\infty} \frac{n}{n^s} - \sum_{n=1}^{\infty} \frac{n}{(n+1)^s}\\
&= \sum_{n=1}^{\infty} n \left( \frac{1}{n^s} - \frac{1}{(n+1)^s} \right)\\
&= \sum_{n=1}^{\infty} n \left(s \int_n^{n+1} \frac{dx}{x^{s+1}} \right)\\
&= s \sum_{n=1}^{\infty}  \int_n^{n+1} \frac{n  \,dx}{x^{s+1}} \\
&= s \sum_{n=1}^{\infty}  \int_n^{n+1} \frac{\floor{x}\,  dx}{x^{s+1}} \\
&= s \int_1^{\infty} \frac{\floor{x} \, dx}{x^{s+1}}.
\end{align*}
Observe that for $\Re s > 1$,
\begin{equation*}
\int_1^{\infty} \frac{dx}{x^{s}} = \frac{1}{s-1} \quad\implies\quad
\frac{1}{s-1} + 1 - s \int_1^{\infty} \frac{x}{x^{s+1}}\,dx = 0
\end{equation*}
and hence 
\begin{align}
\zeta(s) 
&= s \int_1^{\infty} \frac{\floor{x} \, dx}{x^{s+1}} \nonumber \\
&= \left(\frac{1}{s-1} + 1 - s \int_1^{\infty} \frac{x}{x^{s+1}}\,dx \right) + s \int_1^{\infty} \frac{\floor{x} \, dx}{x^{s+1}} \nonumber \\
&= \frac{1}{s-1} + 1 - s \int_1^{\infty} \frac{x-\floor{x}}{x^{s+1}}\,dx.\label{eq:ThisIntegral}
\end{align}
If the integral above defines an analytic function on $\Re s > 0$,
then $\zeta(s)$ can be analytically continued to $\Re s > 0$ except for
a simple pole at $s=1$ with residue $1$.   We prove this with techniques commonly available
at the undergraduate-level (see Remark \ref{Remark:Lebesgue}).

For $n =1,2,\ldots$, let
\begin{equation*}
f_n(s) = \int_n^{n+1} \frac{x-\floor{x}}{x^{s+1}}\,dx .
\end{equation*}
For any simple closed curve $\gamma$ in $\Re s > 0$, 
Fubini's theorem and Cauchy's theorem imply
\begin{align*}
\int_{\gamma} f_n(s)\,ds
&=\int_{\gamma} \int_n^{n+1} \frac{x-\floor{x}}{x^{s+1}}\,dx \,ds\\
&= \int_n^{n+1}\!\!\! (x-\floor{x}) \left(\int_{\gamma}\frac{ds}{x^{s+1}}\right) \,dx\\
&= \int_n^{n+1}\!\!\! (x-\floor{x}) \, 0 \,dx \\
&=0.
\end{align*}
Morera's theorem ensures that each $f_n$ is analytic on $\Re s > 0$.  
If $\Re s \geq \sigma > 0$, then
\begin{align*}
\sum_{n=1}^{\infty} |f_n(s)|
&= \sum_{n=1}^{\infty} \left| \int_n^{n+1} \frac{x-\floor{x}}{x^{s+1}}\,dx\right| \\
&\leq \sum_{n=1}^{\infty}  \int_n^{n+1} \left|\frac{x-\floor{x}}{x^{s+1}}\right|\,dx \\
&\leq \sum_{n=1}^{\infty}  \int_n^{n+1} \frac{dx}{x^{\Re(s+1)}}\\
&\leq \int_1^{\infty}\frac{dx}{x^{\sigma+1}} \\
&= \frac{1}{\sigma} \\
& < \infty.
\end{align*}
Consequently, the Weierstrass $M$-test implies that
\begin{equation}\label{eq:Floor}
\sum_{n=1}^{\infty} f_n(s)  = \int_1^{\infty} \frac{x-\floor{x}}{x^{s+1}}\,dx
\end{equation}
converges absolutely and uniformly on $\Re s \geq \sigma$.  Since $\sigma > 0$
was arbitrary, it follows that the series converges locally uniformly on $\Re s > 0$.
Being the locally uniform limit of analytic functions on $\Re s >0$, we conclude that 
\eqref{eq:Floor} is analytic there.
\end{proof}

\begin{remark}\label{Remark:Lebesgue}
The instructor should be aware that many sources, in the interest of brevity, 
claim without proof that the integral in \eqref{eq:ThisIntegral} defines an analytic
function on $\Re s > 0$.  This is a nontrivial result for an undergraduate course, especially since the
domain of integration is infinite.  If the instructor has Lebesgue integration at their disposal, 
the dominated convergence theorem, which can be applied to $[1,\infty)$, makes the proof significantly easier.
\end{remark}

\begin{remark}\label{Remark:RH}
It turns out that $\zeta(s)$ can be analytically continued to $\C \backslash\{1\}$.
The argument involves the introduction of the gamma function $\Gamma(z) = \int_0^{\infty} x^{z-1} e^{-x}\,dx$ to obtain
the \emph{functional equation}
\begin{equation}\label{eq:Functional}
\zeta (s)=2^{s}\pi ^{s-1} \sin \left({\frac {\pi s}{2}}\right) \Gamma (1-s) \zeta (1-s).
\end{equation}
The extended zeta function has zeros at $-2,-4,-6,\ldots$ (the \emph{trivial zeros}),
along with infinitely many zeros in the \emph{critical strip} $0 < \Re s < 1$ (the \emph{nontrivial zeros}).  
To a few decimal places, here are the first twenty nontrivial zeros that lie in the upper half plane (the zeros are symmetric with respect to the real axis):
\begin{align*}
0.5 +14.1347 i,\,\,
0.5 +21.0220 i,\,\,
0.5 +25.0109 i,\,\,
0.5 +30.4249 i,\,\,
0.5 +32.9351 i,\\
0.5 +37.5862 i,\,\,
0.5 +40.9187 i,\,\,
0.5 +43.3271 i,\,\,
0.5 +48.0052 i,\,\,
0.5 +49.7738 i,\\
0.5 + 52.9703 i,\,\,
0.5 + 56.4462 i,\,\,
0.5 + 59.3470 i,\,\,
0.5 + 60.8318 i,\,\,
0.5 + 65.1125 i,\\
0.5 + 67.0798 i,\,\,
0.5 + 69.5464 i,\,\,
0.5 + 72.0672 i,\,\,
0.5 + 75.7047 i, \,\,
0.5 + 77.1448 i.
\end{align*}
The first $10^{13}$ nontrivial
zeros lie on the \emph{critical line} $\Re s = \frac{1}{2}$.
The famous \emph{Riemann Hypothesis}
asserts that all the zeros in the critical strip lie on the critical line; see Figure \ref{Figure:CriticalStrip}.  This problem 
was first posed by Riemann in 1859 and remains unsolved.  It is considered the most important open problem in mathematics
because of the impact it would have on the distribution of the prime numbers.  

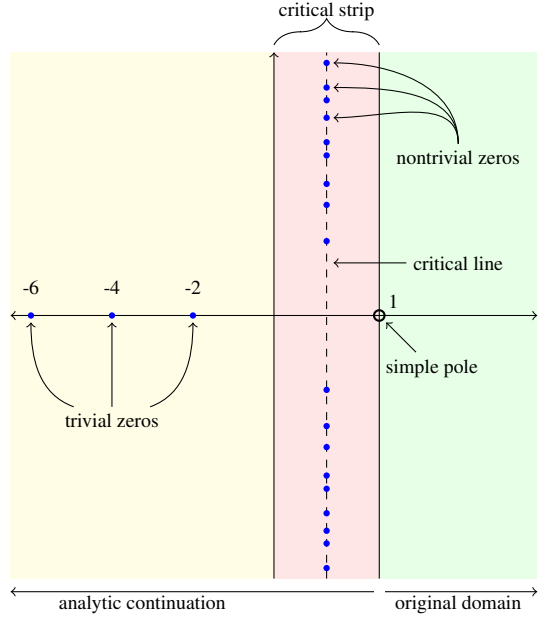
\begin{figure}
    \begin{tikzpicture}[scale=0.7, every node/.style={scale=0.75}]
       \fill[red,opacity=.1] (0,-5) rectangle (2,5);
       \fill[green,opacity=.1] (2,-5) rectangle (5,5);
       \fill[yellow,opacity=.1] (-5,-5) rectangle (0,5);
       \draw[<->,thin](-5,0)--(5,0)node[above]{};
       \draw[<->,thin](-5,-5.25)--(5,-5.25);
       \draw[->,thin](0,-5)--(0,5)node[left]{};
       \draw[](2,-5)--(2,5);
       \draw[dashed](1,-5)--(1,5);

       \foreach \x in {-2,-4,-6}
            \fill[blue] (\x/1.3,0) circle (0.06cm)node[yshift = .5cm]{\color{black}\x};

        \foreach \y in {14.1347,21.0220,25.0109,30.4249,32.9351,37.5862,40.9187,43.3271,48.0052}{
            \fill[blue] (1,\y/10) circle (0.06cm);
            \fill[blue] (1,-\y/10) circle (0.06cm);
        }

        \draw[thick] (2,0) circle (.1cm)node[xshift=.25cm,yshift=.25cm]{1};

        \node (a) at (2,0){};
        \node (b) at (3,-1){simple pole};
        \draw[<-] (a) -- (b);

        \node(c) at (1,1){};
        \node(d) at (3.5,1){critical line};
        \draw[<-] (c) -- (d);

        \draw [decorate,decoration={brace,amplitude=10pt},yshift=4pt]
(0,5) -- (2,5) node [black,midway,yshift=0.6cm]
{critical strip};

        \node (e) at (-2/1.3,0){};
        \node (f) at (-4/1.3,0){};
        \node (g) at (-6/1.3,0){};

        \node(h) at (-4/1.3,-2){trivial zeros};

        \draw[->] (h) to [in=270,out=20](e);
        \draw[->] (h) to [in=270,out=160](g);
        \draw[->] (h) to (f);

        \node (i) at (3.5,3){nontrivial zeros};

        \node (j) at (1,43.3271/10){};
        \node (k) at (1,48.0052/10){};
        \node (l) at (1,37.5862/10){};

        \draw[->] (i) to [in=0,out = 90](j);
        \draw[->] (i) to [in=0,out = 90](k);
        \draw[->] (i) to [in=0,out = 90](l);

        \fill[white](1.9,-5.3) rectangle (2.1,-5.2);

        \node at (3.5,-5.5) {original domain};
        \node at (-2.5,-5.5) {analytic continuation};
    \end{tikzpicture}
    \caption{Analytic continuation of $\zeta(s)$ to $\C \backslash \{1\}$.
    The nontrivial zeros of the Riemann zeta function lie in the \emph{critical strip}
    $0 < \Re s < 1$.  The Riemann Hypothesis asserts that all of them lie on the \emph{critical line} $\Re s= \frac{1}{2}$.}    
    \label{Figure:CriticalStrip}
\end{figure}
\end{remark}

\begin{remark}
Students might benefit from learning that the error in the estimate afforded by the prime number theorem
is tied to the zeros of the zeta function.  Otherwise the Riemann Hypothesis might seem too esoteric and
unrelated to the prime number theorem.
One can show that if $\zeta(s) \neq 0$ for $\Re s > \sigma$, then 
there is a constant $C_{\sigma}$ such that 
\begin{equation*}
| \pi(x) - \Li(x) | \leq C_{\sigma} x^{\sigma}\log x
\end{equation*}
for all $x \geq 2$ \cite[(2.2.6)]{Borwein}.  
Since it is known that the zeta function has infinitely many zeros on the critical line $\Re s = \frac{1}{2}$, 
we must have $\sigma \geq \frac{1}{2}$.  
\end{remark}

\section{The logarithm of $\zeta(s)$}\label{Section:Log}
    In this section we establish a series representation of the logarithm of the zeta function.
    We use this in Section \ref{Section:Nonvanish} to establish the nonvanishing
    of $\zeta(s)$ for $\Re s = 1$ and in Section \ref{Section:Phi} to obtain an analytic continuation of a closely-related function.

    \begin{lemma}\label{Lemma:ZetaLogC}
        If $\Re s > 1$, then 
        $\displaystyle \log \zeta(s) =\sum_{n=1}^{\infty} \frac{c_n }{n^s}$, in which $c_n  \geq 0$ for $n \geq 1$.
    \end{lemma}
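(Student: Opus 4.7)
The plan is to take logarithms of the Euler product and expand each factor as a power series. Since Theorem \ref{Theorem:EulerProduct} guarantees that $\zeta(s)$ is nonvanishing on the simply connected region $\Re s > 1$ and is positive real on $(1,\infty)$, there is a unique analytic branch of $\log \zeta(s)$ there which is real on $(1,\infty)$; it suffices to identify this branch with the desired Dirichlet-type series.

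For $\Re s > 1$ and any prime $p$, we have $|p^{-s}| = p^{-\Re s} < 1$, so the Mercator series
\begin{equation*}
-\log\!\left(1 - \frac{1}{p^s}\right) = \sum_{k=1}^{\infty} \frac{1}{k p^{ks}}
\end{equation*}
converges absolutely and defines the principal branch of the logarithm on the right. Summing over primes, I would form the double series
\begin{equation*}
L(s) = \sum_p \sum_{k=1}^{\infty} \frac{1}{k p^{ks}} = \sum_{n=1}^{\infty} \frac{c_n}{n^s},
\end{equation*}
where
\begin{equation*}
c_n = \begin{cases} 1/k & \text{if } n = p^k \text{ for some prime } p \text{ and integer } k \geq 1,\\ 0 & \text{otherwise.} \end{cases}
\end{equation*}
The rearrangement is legitimate because on $\Re s \geq \sigma > 1$ one has $\sum_p \sum_k \frac{1}{k p^{k\sigma}} \leq \sum_p \sum_k p^{-k\sigma} = \sum_p (p^\sigma - 1)^{-1} \leq 2 \sum_n n^{-\sigma} < \infty$, which also gives locally uniform convergence by the Weierstrass $M$-test, so $L(s)$ is analytic on $\Re s > 1$. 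By construction $c_n \geq 0$.

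It remains to check $L(s) = \log\zeta(s)$. Exponentiating term by term (using that the partial sums of $L$ over primes $p \leq N$ are finite sums of absolutely convergent series) gives
\begin{equation*}
\exp L(s) = \prod_p \exp\!\left(-\log\!\left(1 - \tfrac{1}{p^s}\right)\right) = \prod_p \left(1 - \tfrac{1}{p^s}\right)^{-1} = \zeta(s),
\end{equation*}
by the Euler product formula. Thus $L(s)$ is an analytic branch of $\log\zeta(s)$ on $\Re s > 1$, and since $L$ is real on $(1,\infty)$ (all terms are positive), $L$ agrees with the distinguished real branch there. The main subtlety is precisely this last step: justifying that "the logarithm" means the branch on the simply connected domain $\Re s > 1$ that is real on $(1,\infty)$, and confirming that the term-by-term exponentiation of the double series matches the Euler product rather than a different branch.
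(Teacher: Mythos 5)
Your proposal is correct, and its core is the same as the paper's: the Euler product, the Mercator expansion $-\log(1-p^{-s})=\sum_k \tfrac{1}{k}p^{-ks}$, and an absolute-convergence rearrangement producing exactly the same coefficients $c_n$ ($1/k$ if $n=p^k$, else $0$). The one genuine difference is the direction of the final identification. The paper starts from $\zeta$, writes $\log\zeta(s)=\log\prod_p(1-p^{-s})^{-1}=\sum_p\log(1-p^{-s})^{-1}$, justifying the interchange by the nonvanishing of $\zeta$ and ``continuity of the logarithm''---a step that quietly hides the branch bookkeeping for the logarithm of a locally uniform limit of partial products. You instead build the series $L(s)$ first and exponentiate, so that only the single-valued function $\exp$ is applied to a limit; the branch issue is then settled at the end by noting that $L$ and the distinguished branch of $\log\zeta$ (real on $(1,\infty)$) are both analytic logarithms of $\zeta$ on the connected half plane $\Re s>1$, hence differ by a constant in $2\pi i\Z$, which is forced to be $0$ because both are real on $(1,\infty)$---a one-line justification you gesture at and should state explicitly. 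In short, your route buys a cleaner treatment of the multivaluedness at the price of this small uniqueness-of-branch argument, while the paper's route is shorter on the page but leans on an unelaborated appeal to continuity of $\log$; both are sound, and your convergence estimate on $\Re s\geq\sigma>1$ legitimizing the rearrangement matches the paper's use of absolute convergence.
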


    \begin{proof}
        The open half plane $\Re s > 1$ is simply connected and $\zeta(s)$ does not vanish there 
        (Theorem \ref{Theorem:EulerProduct}).
        Thus, we may define a branch of $\log \zeta(s)$ for $\Re s >1$ such that $\log \zeta(\sigma) \in \R$
        for $\sigma > 1$.      
        Recall that
        \begin{equation}\label{eq:Prev}
            \log \left( \frac{1}{1-z} \right) = \sum_{k=1}^{\infty} \frac{z^k}{k}
        \end{equation}
        for $|z| < 1$ and observe that $\Re s > 1$ implies $|p^{-s}| = p^{-\Re s} < 1$, which permits $z = p^{-s}$ in \eqref{eq:Prev}.
        The Euler product formula \eqref{eq:EulerProduct}, the nonvanishing of $\zeta(s)$ for $\Re s > 1$, and the continuity of the logarithm imply
        \begin{align}
            \log \zeta(s)
            &= \log \prod_p \left( \frac{1}{1 - p^{-s}} \right)
            = \sum_p \log \left(\frac{1}{1 - p^{-s}} \right)  \label{eq:Thing} \\
            &= \sum_p \sum_{k=1}^{\infty} \frac{(p^{-s})^k}{k} 
            = \sum_p \sum_{k=1}^{\infty} \frac{1/k}{ (p^{k})^s} 
            = \sum_{n=1}^{\infty} \frac{c_n }{n^s},\nonumber
        \end{align}
        in which
        \begin{equation}\label{eq:Mangle}
            c_n  = 
            \begin{cases}
                \dfrac{1}{k} & \text{if $n = p^k$},\\[5pt]
                0 & \text{otherwise}.
            \end{cases}
        \end{equation}
        The rearrangement of the series above is permissible by absolute convergence.
    \end{proof}

    \begin{lemma}\label{Lemma:ZetaAbs}
        If $s= \sigma + it$, in which $\sigma > 1$ and $t \in \R$, then 
        \begin{equation*}
            \log|\zeta(s)| = \sum_{n=1}^{\infty} \frac{c_n  \cos(t\log n)}{n^{\sigma}},
        \end{equation*}
        in which the $c_n$ are given by \eqref{eq:Mangle}.
    \end{lemma}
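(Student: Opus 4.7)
\medskip
\noindent\textbf{Proof plan.} The plan is to deduce the identity directly from Lemma \ref{Lemma:ZetaLogC} by taking real parts termwise. First I would recall that for any branch of the complex logarithm one has $\Re \log w = \log|w|$; in particular, the branch of $\log \zeta(s)$ chosen in the proof of Lemma \ref{Lemma:ZetaLogC} satisfies
\begin{equation*}
\log|\zeta(s)| = \Re\bigl(\log\zeta(s)\bigr) \qquad \text{for } \Re s > 1.
\end{equation*}
Thus it suffices to compute the real part of the Dirichlet series $\sum_{n=1}^{\infty} c_n/n^s$ obtained in Lemma \ref{Lemma:ZetaLogC}.

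Next, I would expand each term using the definition of the complex power function. For $s = \sigma + it$ and $n \geq 1$,
\begin{equation*}
\frac{1}{n^s} = e^{-(\sigma+it)\log n} = n^{-\sigma}\bigl(\cos(t\log n) - i\sin(t\log n)\bigr),
\end{equation*}
so that
\begin{equation*}
\Re\!\left(\frac{c_n}{n^s}\right) = \frac{c_n\cos(t\log n)}{n^{\sigma}}.
\end{equation*}

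Finally, I would justify the interchange of $\Re$ with the infinite sum. From \eqref{eq:Mangle} one has $0 \leq c_n \leq 1$ for every $n$, so the series $\sum c_n/n^s$ converges absolutely for $\sigma > 1$, since
\begin{equation*}
\sum_{n=1}^{\infty} \left|\frac{c_n}{n^s}\right| \leq \sum_{n=1}^{\infty} \frac{1}{n^\sigma} < \infty.
\end{equation*}
Because $\Re$ is a continuous $\R$-linear functional on $\C$, absolute convergence lets us pass it inside the summation, yielding the stated formula. There is no real obstacle here: once Lemma \ref{Lemma:ZetaLogC} is in hand, this is essentially a bookkeeping step that separates modulus from argument in the Dirichlet series, preparing the ground for the classical $3 + 4\cos\theta + \cos 2\theta$ inequality used in Section~\ref{Section:Nonvanish}.
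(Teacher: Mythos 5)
Your proposal is correct and follows essentially the same route as the paper: take real parts of the series from Lemma \ref{Lemma:ZetaLogC} and apply Euler's formula to each term $c_n n^{-s}$. Your explicit justification for passing $\Re$ through the infinite sum (absolute convergence, since $0 \leq c_n \leq 1$) is a detail the paper leaves implicit, and is a welcome addition.
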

    
    \begin{proof}
        Since $\sigma = \Re s > 1$, Lemma \ref{Lemma:ZetaLogC} and Euler's formula provide
        \begin{align*}
            \log|\zeta(s)|
            &= \Re \big(\log \zeta(s) \big)
            = \Re \sum_{n=1}^{\infty} \frac{c_n }{n^{\sigma+it}}\\
            &= \Re \sum_{n=1}^{\infty} \frac{c_n}{e^{(\sigma+it)\log n}}
            = \Re \sum_{n=1}^{\infty} \frac{c_n e^{-it\log n }}{e^{\sigma\log n} }\\
            &=  \sum_{n=1}^{\infty} \frac{c_n \cos(t \log n) }{n^{\sigma}}. \qedhere
        \end{align*}
    \end{proof}

    \begin{remark}\label{Remark:EulerEuclid}
        The identity \eqref{eq:Thing} permits a proof that
        $\sum_p p^{-1}$ diverges; this is
        Euler's refinement of Euclid's theorem (the infinitude of the primes).
        Suppose toward a contradiction that $\sum_p p^{-1}$ converges.
        For $|z| < \frac{1}{2}$, \eqref{eq:Prev} implies
        \begin{equation}\label{eq:TheDarnInequality}
           \left| \log \left( \frac{1}{1-z} \right)\right| = \left| \sum_{k=1}^{\infty} \frac{z^k}{k} \right| \leq \sum_{k=1}^{\infty} |z|^k = \frac{|z|}{1-|z| } \leq 2|z|.
        \end{equation}
        For $s > 1$, \eqref{eq:Thing} and the previous inequality imply
        \begin{equation*}
            \log \zeta(s) = \sum_p \log \left(\frac{1}{1 - p^{-s}} \right) < 2\sum_p \frac{1}{p^s} \leq 2 \sum_p \frac{1}{p} < \infty.
        \end{equation*}
        This contradicts the fact that $\zeta(s)$ has a pole at $s=1$.
    The divergence of $\sum_p p^{-1}$ tells us that
    the primes are packed tighter in the natural numbers than are the perfect squares since
    $\sum_{n=1}^{\infty} \frac{1}{n^2} = \zeta(2)$ is finite (in fact, Euler proved that it equals $\frac{\pi^2}{6}$).
\end{remark}

\section{Nonvanishing of $\zeta(s)$ on $\Re s = 1$}\label{Section:Nonvanish}
Theorem \ref{Theorem:ZetaExtend} provides the analytic continuation of $\zeta(s)$ to $\Re s > 0$.
The following important result tells us that the extended zeta function does not vanish on the vertical line $\Re s = 1$.
One can show that this statement is equivalent to the prime number theorem,  although we focus only on deriving the prime number theorem from it.

\begin{theorem}\label{Theorem:ZetaZeros}
$\zeta(s)$ has no zeros on $\Re s = 1$.
\end{theorem}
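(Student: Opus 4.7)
The plan is to follow the classical argument of Mertens/de~la~Vall\'ee~Poussin that exploits the trigonometric inequality
\[
3 + 4\cos\theta + \cos 2\theta \;=\; 2(1+\cos\theta)^2 \;\geq\; 0.
\]
Since $\zeta$ has a pole at $s=1$, we only need to rule out zeros of the form $s_0 = 1+it_0$ with $t_0\neq 0$. I would argue by contradiction: assume $\zeta(1+it_0)=0$ and then balance the order of vanishing at $1+it_0$ against the order of the pole at $1$ by combining the values of $\zeta$ at three horizontally aligned points $\sigma$, $\sigma+it_0$, $\sigma+2it_0$ as $\sigma\to 1^+$.

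Concretely, for $\sigma>1$, applying Lemma~\ref{Lemma:ZetaAbs} with $t=0$, $t=t_0$, and $t=2t_0$ gives
\[
3\log|\zeta(\sigma)| + 4\log|\zeta(\sigma+it_0)| + \log|\zeta(\sigma+2it_0)|
= \sum_{n=1}^{\infty} \frac{c_n\bigl(3 + 4\cos(t_0\log n) + \cos(2t_0\log n)\bigr)}{n^{\sigma}}.
\]
Since each $c_n\geq 0$ and the bracketed trigonometric expression is nonnegative, the right-hand side is $\geq 0$, so exponentiating yields the key inequality
\[
|\zeta(\sigma)|^{3}\,|\zeta(\sigma+it_0)|^{4}\,|\zeta(\sigma+2it_0)| \;\geq\; 1 \qquad (\sigma>1).
\]
The next step is to let $\sigma\to 1^+$ and extract asymptotic behavior of each factor from Theorem~\ref{Theorem:ZetaExtend}. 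Because $\zeta$ has a simple pole with residue $1$ at $s=1$, one has $|\zeta(\sigma)| = O\bigl((\sigma-1)^{-1}\bigr)$. If $\zeta(1+it_0)=0$, then analyticity of $\zeta$ near $1+it_0$ gives $|\zeta(\sigma+it_0)| = O(\sigma-1)$, and analyticity at $1+2it_0$ (where $\zeta$ is regular, since $2it_0\neq 0$) gives $|\zeta(\sigma+2it_0)| = O(1)$.

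Multiplying these estimates, the left-hand side of the key inequality is $O\bigl((\sigma-1)^{4-3}\bigr) = O(\sigma-1)$, which tends to $0$ as $\sigma\to 1^+$. This contradicts the lower bound of $1$, so no such zero $1+it_0$ can exist. The main obstacles to watch for in the writeup are purely expository: making sure the logarithm used in Lemma~\ref{Lemma:ZetaAbs} matches the principal branch so that exponentiating the inequality is valid, being careful that $t_0\neq 0$ implies $2it_0\neq 0$ so $\zeta$ is genuinely analytic (not polar) at $1+2it_0$, and clearly justifying the passage from the pole/zero orders to the stated $O$-estimates via the Laurent expansion at $s=1$ and Taylor expansion at $s=1+it_0$.
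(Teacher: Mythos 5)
Your proposal is correct and is essentially the paper's own argument: both use Lemma~\ref{Lemma:ZetaAbs} together with the identity $3+4\cos\theta+\cos 2\theta = 2(1+\cos\theta)^2 \geq 0$ applied at the three points $\sigma$, $\sigma+it_0$, $\sigma+2it_0$ with weights $3,4,1$, and derive a contradiction from the simple pole at $s=1$ versus the assumed zero at $1+it_0$. The only difference is cosmetic: the paper packages the contradiction as $\log|\zeta^3(s)\zeta^4(s+it_0)\zeta(s+2it_0)| \to -\infty$ while remaining nonnegative, whereas you exponentiate to get the product bounded below by $1$ yet tending to $0$.
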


\begin{proof}
Recall that $\zeta(s)$ extends analytically to $\Re s > 0$ (Theorem \ref{Theorem:ZetaExtend})
except for a simple pole at $s=1$; in particular, $\zeta(s)$ does not vanish at $s=1$.
Suppose toward a contradiction that $\zeta(1+it) = 0$ for some $t \in \R \backslash\{0\}$ and consider
\begin{equation*}
f(s)= \zeta^3(s) \zeta^4(s+it) \zeta(s+2it).
\end{equation*}
Observe that
\begin{enumerate}
\item $\zeta^3(s)$ has a pole of order three at $s=1$ since $\zeta(s)$ has a simple pole at $s=1$;
\item $\zeta^4(s+it)$ has a zero of order at least four at $s=1$ since $\zeta(1+it)=0$; and
\item $\zeta(s+2it)$ does not have a pole at $s=1$ since $t \in \R \backslash\{0\}$ and $s=1$ is the only pole of $\zeta(s)$ on $\Re s = 1$.
\end{enumerate}
Thus, the singularity of $f$ at $s=1$ is removable and $f(1) = 0$.  Therefore,
\begin{equation}\label{eq:ZetaLog}
\lim_{s\to 1} \log |f(s)| = - \infty.
\end{equation}
On the other hand, Lemma \ref{Lemma:ZetaAbs} yields
\begin{align*}
\log |f(s)|
&= 3 \log |\zeta(s)| + 4 \log |\zeta(s+it)| + \log |\zeta(s+2it)| \\
&=3  \sum_{n=1}^{\infty} \frac{c_n }{n^{\sigma}} + 4\sum_{n=1}^{\infty} \frac{c_n \cos (t \log n)}{n^{\sigma}}  + \sum_{n=1}^{\infty} \frac{c_n \cos (2t \log n)}{n^{\sigma}}  \\
&= \sum_{n=1}^{\infty} \frac{c_n }{n^{\sigma}} \big(3 + 4 \cos (t \log n) + \cos(2t \log n) \big) \\
&\geq 0
\end{align*}
since $c_n  \geq 0$ for $n\geq 1$ and 
\begin{equation*}
3 + 4 \cos x + \cos 2x = 2(1+ \cos x)^2 \geq 0, \qquad \text{for $x \in \R$}.
\end{equation*}
Since this contradicts \eqref{eq:ZetaLog}, we conclude that $\zeta(s)$ has no zeros with $\Re s= 1$.
\end{proof}

\begin{remark}
Since Theorem \ref{Theorem:EulerProduct} already ensures that $\zeta(s) \neq 0$ for $\Re s > 1$, Theorem \ref{Theorem:ZetaZeros} implies
$\zeta(s)$ does not vanish in the closed half plane $\Re s \geq 1$.  
\end{remark}

\section{Chebyshev Theta Function}\label{Section:Theta}
    It is often convenient to attack problems related to prime numbers with logarithmically
    weighted sums.  Instead of working with $\pi(x) = \sum_{p\leq x} 1$ directly, we consider
    \begin{equation}\label{eq:ThetaDef}
    \vartheta(x) = \sum_{p\leq x} \log p.
    \end{equation}
    We will derive the prime number theorem from the statement 
    $\vartheta(x) \sim x$.   Since this asymptotic equivalence is difficult to establish,
    we first content ourselves with an upper bound.
    
    \begin{theorem}[Chebyshev's Lemma]\label{Theorem:Chebyshev}
        $\vartheta(x) \leq 3 x$.
    \end{theorem}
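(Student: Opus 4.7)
The plan is to use the classical Chebyshev trick based on the central binomial coefficient $\binom{2n}{n}$. The crucial observation is that if $p$ is a prime with $n < p \leq 2n$, then $p$ appears exactly once in $(2n)!$ (namely as the factor $p$ itself, since $2p > 2n$) and does not appear in $n!$ (since $p > n$). Thus each such $p$ divides $\binom{2n}{n} = \frac{(2n)!}{n!\,n!}$, and since distinct primes are coprime, the fundamental theorem of arithmetic yields $\prod_{n<p\leq 2n} p \leq \binom{2n}{n}$. On the other hand, the binomial theorem gives $\binom{2n}{n} \leq (1+1)^{2n} = 4^n$. Taking logarithms then produces the key dyadic estimate
\[
\vartheta(2n) - \vartheta(n) \leq n \log 4 = 2n \log 2
\]
for every positive integer $n$.

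With this in hand, I iterate along powers of two. Setting $n = 2^{j-1}$ and summing for $j = 1, 2, \ldots, k$ telescopes to
\[
\vartheta(2^k) \leq (\log 2)(2 + 4 + \cdots + 2^k) < 2^{k+1}\log 2.
\]
For arbitrary real $x \geq 1$, choose $k$ so that $2^{k-1} \leq x \leq 2^k$; then the monotonicity of $\vartheta$ gives $\vartheta(x) \leq \vartheta(2^k) < 2^{k+1} \log 2 \leq 4x \log 2$. Since $4 \log 2 \approx 2.77 < 3$, this delivers $\vartheta(x) < 3x$. For $0 < x < 1$ the defining sum in \eqref{eq:ThetaDef} is empty, so the bound $\vartheta(x) = 0 \leq 3x$ holds trivially.

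The only real subtlety is the divisibility step: one must verify not merely that each individual prime $p \in (n, 2n]$ divides $\binom{2n}{n}$, but that the product of all such primes does so—which rests on the coprimality of distinct primes. Everything else is pure combinatorics (the binomial theorem and a telescoping sum) together with the monotonicity $\vartheta(x) \leq \vartheta(y)$ for $x \leq y$. No deeper number theory is needed, which is consistent with the paper's stated goal of keeping arithmetic prerequisites to an absolute minimum. The generous constant $3$ (versus the sharp $2\log 2$ obtained above) is evidently chosen so that the estimate can be quoted without having to carry $\log 2$ through later computations.
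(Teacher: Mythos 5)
Your proof is correct and follows essentially the same route as the paper: the divisibility of $\binom{2n}{n}$ by every prime in $(n,2n]$, the bound $\binom{2n}{n}\leq 4^n$, the dyadic telescoping to $\vartheta(2^k)<2^{k+1}\log 2$, and the bracketing of $x$ between consecutive powers of $2$ to land on the constant $4\log 2<3$. The only nitpick is your closing parenthetical: the constant your argument actually yields is $4\log 2\approx 2.77$, not $2\log 2$ (the factor of $2$ lost in the bracketing step is genuine), though this does not affect the validity of the proof.
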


\begin{proof}
If $n < p \leq 2n$, then
\begin{equation*}
p \quad \text{divides}\quad \binom{2n}{n} = \frac{(2n)!}{n!n!}
\end{equation*}
since $p$ divides the numerator but not the denominator.  The binomial theorem implies
\begin{align*}
2^{2n} &= (1+1)^{2n} = \sum_{k=0}^{2n} \binom{2n}{k}1^k1^{2n-k} \\
&\geq \binom{2n}{n} \geq \prod_{n < p \leq 2n}p = \prod_{n < p \leq 2n} e^{\log p} \\
&= \exp\Big(\sum_{n < p \leq 2n} \log p \Big) \\
&= \exp\big(\vartheta(2n)-\vartheta(n)\big).
\end{align*}
Therefore,
\begin{equation*}
\vartheta(2n) - \vartheta(n) \leq 2n \log 2.
\end{equation*}
Set $n = 2^{k-1}$ and deduce
\begin{equation*}
\vartheta(2^k) - \vartheta(2^{k-1}) \leq 2^k \log 2.
\end{equation*}
Since $\vartheta(1) = 0$, a telescoping-series argument and the summation formula for a finite geometric series provide
\begin{align*}
\vartheta(2^k)
&= \vartheta(2^k) - \vartheta(2^0) 
= \sum_{i=1}^k \big (\vartheta(2^i) - \vartheta(2^{i-1}) \big) \\
&\leq \sum_{i=1}^k 2^i \log 2 
< (1+ 2 + 2^2 + \cdots + 2^k) \log 2 \\
&< 2^{k+1} \log 2.
\end{align*}
If $x \geq 1$, then let $2^k \leq x < 2^{k+1}$; that is, let $k = \lfloor \frac{\log x}{\log 2} \rfloor$.
Then
\begin{equation*}
\vartheta(x) \leq \vartheta(2^{k+1}) \leq 2^{k+2} \log 2 = 4 \cdot 2^k \log 2 \leq x (4 \log 2) < 3x
\end{equation*}
since $4 \log 2 \approx 2.7726 < 3$.
\end{proof}

\begin{remark}
The Euler product formula \eqref{eq:EulerProduct}, which requires the fundamental theorem of arithmetic,
and the opening lines of the proof of Chebyshev's lemma are the only portions of our proof of the prime number theorem
that explicitly require number theory.
\end{remark}

\begin{remark}
There are many other ``theta functions,'' some of which arise in 
the context of the Riemann zeta function.  For example, the Jacobi theta function 
$\theta(z)= \sum_{m = - \infty}^{\infty} e^{-\pi m^2 z}$, defined for $\Re z > 0$, is often used in
proving the functional equation \eqref{eq:Functional}.
\end{remark}

\section{The $\Phi$ Function}\label{Section:Phi}
Although we have tried to limit the introduction of new functions,
we must consider
    \begin{equation}\label{eq:PhiDef}
        \Phi(s)= \sum_p \frac{\log p}{p^s},
    \end{equation}
    whose relevance to the prime numbers is evident from its definition.
    If $\Re s \geq \sigma > 1$, then
    \begin{equation*}
        \sum_{p} \left| \frac{\log p}{p^s} \right|
        \leq \sum_{p} \frac{\log p}{p^{\Re s}} 
        \leq \sum_{p} \frac{\log p}{p^{\sigma}}
        < \sum_{n=1}^{\infty} \frac{\log n}{n^{\sigma}}
        < \infty
    \end{equation*}
    by the integral test.
    The Weierstrass $M$-test ensures \eqref{eq:PhiDef} converges uniformly
    on $\Re s \geq \sigma$.  Since the summands in \eqref{eq:PhiDef} are analytic on $\Re s > 1$
    and $\sigma > 1$ was arbitrary, the series \eqref{eq:PhiDef} converges locally
    uniformly on $\Re s > 1$ and hence $\Phi(s)$ is analytic there. 
    For the prime number theorem,
    we need a little more.

    \begin{theorem}\label{Theorem:PhiExtend}
        $\displaystyle\Phi(s) - \frac{1}{s-1}$ is analytic on an open set containing $\Re s \geq 1$.
    \end{theorem}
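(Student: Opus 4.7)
The plan is to connect $\Phi(s)$ to the logarithmic derivative $-\zeta'(s)/\zeta(s)$, and then to use Theorems \ref{Theorem:ZetaExtend} and \ref{Theorem:ZetaZeros} to extend that logarithmic derivative analytically across the line $\Re s = 1$. The single term $1/(s-1)$ subtracted in the statement is precisely the principal part of $-\zeta'/\zeta$ at $s=1$, so the singularity will cancel.

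To begin, I would differentiate the series in Lemma \ref{Lemma:ZetaLogC} term-by-term on $\Re s > 1$ (valid by absolute and locally uniform convergence) to obtain
\begin{equation*}
-\frac{\zeta'(s)}{\zeta(s)} \;=\; \sum_{n=1}^{\infty} \frac{c_n \log n}{n^s} \;=\; \sum_p \sum_{k=1}^{\infty} \frac{\log p}{p^{ks}},
\end{equation*}
since $c_{p^k} \log(p^k) = \log p$ and $c_n = 0$ otherwise. Peeling off the $k=1$ terms yields the key identity
\begin{equation*}
\Phi(s) \;=\; -\frac{\zeta'(s)}{\zeta(s)} \;-\; R(s), \qquad R(s) := \sum_p \frac{\log p}{p^s(p^s - 1)},
\end{equation*}
on $\Re s > 1$. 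A short Weierstrass-$M$ estimate (using $|p^s - 1| \geq p^\sigma - 1 \geq \tfrac{1}{2}p^\sigma$ for $\Re s \geq \sigma > \tfrac12$ and $p$ large enough, then comparing with the convergent $\sum_n (\log n)/n^{2\sigma}$) shows that $R$ is analytic on the full half-plane $\Re s > \tfrac{1}{2}$.

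Next I would analyze $-\zeta'(s)/\zeta(s) - 1/(s-1)$. Theorem \ref{Theorem:ZetaExtend} extends $\zeta$ analytically to $\Re s > 0$ with a simple pole of residue $1$ at $s=1$, and Theorem \ref{Theorem:ZetaZeros} says $\zeta \neq 0$ on $\Re s = 1$. At any point $s_0$ with $\Re s_0 = 1$ and $s_0 \neq 1$, $\zeta$ is analytic and nonzero, so nonzero on some open disk about $s_0$; on that disk $-\zeta'/\zeta$ is analytic. At $s=1$, writing $\zeta(s) = (s-1)^{-1} + g(s)$ with $g$ analytic near $s=1$ and computing directly gives
\begin{equation*}
-\frac{\zeta'(s)}{\zeta(s)} \;=\; \frac{1}{s-1} \;+\; (\text{analytic near } s=1),
\end{equation*}
so $-\zeta'/\zeta - 1/(s-1)$ has a removable singularity at $s=1$. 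Taking the union of all these disks with the open half-plane $\Re s > 1$ (and shrinking inside $\Re s > \tfrac{1}{2}$) produces an open set $U \supset \{\Re s \geq 1\}$ on which both $-\zeta'/\zeta - 1/(s-1)$ and $R$ are analytic; subtracting gives the advertised analytic extension of $\Phi(s) - 1/(s-1)$.

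The main obstacle is the Laurent-type calculation at $s=1$: one must confirm that a simple pole of $\zeta$ with residue $1$ produces a simple pole of $-\zeta'/\zeta$ with residue \emph{exactly} $1$, so that the subtracted $1/(s-1)$ cancels the principal part cleanly rather than leaving a leftover. Everything else — termwise differentiation, the Weierstrass-$M$ bound for $R$, and pointwise analyticity of $-\zeta'/\zeta$ on the rest of $\Re s = 1$ — is routine given the tools already developed.
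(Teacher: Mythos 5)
Your proposal is correct and follows essentially the same route as the paper: both express $\Phi(s) = -\zeta'(s)/\zeta(s) - \sum_p \frac{\log p}{p^s(p^s-1)}$, show that the remainder sum is analytic on $\Re s > \tfrac{1}{2}$ by a Weierstrass $M$-test, and then invoke Theorems \ref{Theorem:ZetaExtend} and \ref{Theorem:ZetaZeros} to cancel the pole at $s=1$ and to cover the line $\Re s = 1$ by zero-free neighborhoods (the paper obtains the identity by differentiating $-\sum_p \log(1-p^{-s})$ term-by-term rather than the Dirichlet series of Lemma \ref{Lemma:ZetaLogC}, a cosmetic difference). The ``obstacle'' you flag is harmless: writing $\zeta(s) = (s-1)^{-1}Z(s)$ with $Z$ analytic near $s=1$ and $Z(1)=1$ gives $-\zeta'(s)/\zeta(s) = \frac{1}{s-1} - \frac{Z'(s)}{Z(s)}$, so the residue of $-\zeta'/\zeta$ at $s=1$ equals the order of the pole (namely $1$), independent of the residue of $\zeta$, and the subtracted $\frac{1}{s-1}$ cancels the principal part exactly.
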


\begin{proof}
For $\Re s > 1$, \eqref{eq:Thing} tells us
\begin{equation}\label{eq:LogZssum}
\log \zeta(s) 
= \log \Big(\prod_p(1 - p^{-s})^{-1}\Big)
= - \sum_p \log (1 - p^{-s}).
\end{equation}
The inequality \eqref{eq:TheDarnInequality} implies
\begin{equation*}
|1-p^{-s} | \leq \frac{2}{p^{\Re s}},
\end{equation*}
which implies that the convergence in \eqref{eq:LogZssum} is locally uniform on $\Re s > 1$.
Consequently,
we may take the derivative of \eqref{eq:LogZssum} term-by-term and get
\begin{align*}
-\frac{\zeta'(s)}{\zeta(s)}
&= \sum_p \frac{ (\log p) p^{-s}}{1 - p^{-s}} 
= \sum_p (\log p) \left(\frac{ 1}{p^s - 1} \right)\\
&= \sum_p (\log p) \left(\frac{1}{p^s} + \frac{ 1}{p^s(p^s - 1)} \right)
= \sum_p \left(\frac{\log p}{p^s} + \frac{ \log p}{p^s(p^s - 1)} \right)\\
&= \sum_p \frac{\log p}{p^s} + \sum_p  \frac{\log p}{p^s(p^s-1)}
= \Phi(s) + \sum_p  \frac{\log p}{p^s(p^s-1)}.
\end{align*}
If $\Re s \geq \sigma > \frac{1}{2}$, then the limit comparison test and integral test\footnote{Compare
$\sum_{n=2}^{\infty} \frac{\log n}{(n^{\sigma}-1)^2}$ with $\sum_{n=2}^{\infty} \frac{\log n}{n^{2\sigma}}$ and observe that
$\int_2^{\infty} \frac{\log t}{t^{2\sigma}}\,dt <\infty$.}
imply
\begin{equation*}
\sum_p  \left|\frac{\log p}{p^s(p^s-1)}\right|
\leq \sum_{n=2}^{\infty} \frac{\log n}{(n^{\Re s}-1)^2} \\
\leq \sum_{n=2} \frac{\log n}{(n^{\sigma}-1)^2} 
< \infty.
\end{equation*}
The Weierstrass $M$-test ensures that 
$$\sum_p  \frac{\log p}{p^s(p^s-1)}$$ converges locally uniformly on $\Re s > \frac{1}{2}$ and
is analytic there.
Theorem \ref{Theorem:ZetaExtend} implies that
\begin{equation*}
\Phi(s) = -\frac{\zeta'(s)}{\zeta(s)} -   \sum_p  \frac{\log p}{p^s(p^s-1)}
\end{equation*}
extends meromorphically to $\Re s > \frac{1}{2}$
with poles only at $s=1$ and the zeros of $\zeta(s)$.
Theorem \ref{Theorem:ZetaExtend} also yields
\begin{equation*}
\zeta(s) = (s-1)^{-1}Z(s), \qquad Z(1) = 1,
\end{equation*}
in which $Z(s)$ is analytic near $s=1$.
Consequently,
\begin{equation*}
\frac{\zeta'(s)}{\zeta(s)}  = \frac{ -1(s-1)^{-2} Z(s) + (s-1)^{-1} Z'(s)}{(s-1)^{-1}Z(s)}
= - \frac{1}{s-1} + \frac{Z'(s)}{Z(s)}
\end{equation*}
and hence
\begin{equation*}
\Phi(s) - \frac{1}{s-1} =  - \frac{Z'(s)}{Z(s)} -   \sum_p  \frac{\log p}{p^s(p^s-1)},
\end{equation*}
in which the right-hand side is meromorphic on $\Re s > \frac{1}{2}$
with poles only at the zeros of $\zeta(s)$.  
Theorem \ref{Theorem:ZetaZeros} ensures that $\zeta$ has no zeros on $\Re s = 1$,
so the right-hand side extends analytically to some open neighborhood of $\Re s \geq 1$;
see Remark \ref{Remark:Psi}.
\end{proof}

\begin{remark}\label{Remark:Psi}
The zeros of a nonconstant analytic function are isolated, so no bounded sequence of zeta zeros can converge to a point on $\Re s =1$.
Consequently, it is possible to extend $\Phi(s) - (s-1)^{-1}$ a little beyond $\Re s = 1$ in a manner that avoids the zeros of $\zeta(s)$.  It may not be possible to 
do this on a half plane, however.  The Riemann Hypothesis suggests that the half plane $\Re s > \frac{1}{2}$ works, but this remains unproven.
\end{remark}

\section{Laplace Transforms}\label{Section:Laplace}

Laplace transform methods are commonly used to study differential equations 
and often feature prominently in complex-variables texts.  We need only the basic definition
and a simple convergence result.
The following theorem is not stated in the greatest generality possible, but it is sufficient
for our purposes.

\begin{theorem}\label{Theorem:LaplaceAnalytic}
    Let $f:[0,\infty)\to\C$ be piecewise continuous on $[0,a]$ for all $a>0$ and
    \begin{equation*}
    	|f(t)| \leq Ae^{Bt},\qquad \text{for $t \geq 0$}.
    \end{equation*}
    Then the \emph{Laplace transform}
    \begin{equation}\label{eq:Laplace}
        g(z) = \int_0^{\infty} f(t) e^{-zt}\,dt
    \end{equation}
    of $f$ is well defined and analytic on the half plane $\Re z > B$.
\end{theorem}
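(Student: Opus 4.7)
The plan is to prove well-definedness by a direct domination estimate and then to prove analyticity by realizing $g$ as a locally uniform limit of analytic functions obtained by truncating the interval of integration, in the same spirit as the proof of Theorem \ref{Theorem:ZetaExtend}. This keeps the argument within the toolkit (Morera, Cauchy, Fubini, Weierstrass $M$-test, locally uniform limits) already used in the paper.

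For well-definedness, fix $z$ with $\Re z > B$ and set $\sigma = \Re z$. The hypothesis gives
\begin{equation*}
|f(t) e^{-zt}| \leq A e^{Bt} e^{-\sigma t} = A e^{-(\sigma - B) t},
\end{equation*}
which is integrable on $[0,\infty)$ since $\sigma - B > 0$. So \eqref{eq:Laplace} converges absolutely and $g(z)$ is defined on $\Re z > B$.

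For analyticity, define the truncations
\begin{equation*}
g_n(z) = \int_0^n f(t) e^{-zt}\, dt, \qquad n = 1, 2, \ldots.
\end{equation*}
First I would show each $g_n$ is entire via Morera's theorem. Given any simple closed curve $\gamma$ in $\mathbb{C}$, the function $(t,z) \mapsto f(t) e^{-zt}$ is bounded on the compact set $[0,n] \times \gamma^*$ (using piecewise continuity of $f$ on $[0,n]$ to bound $|f|$ there), so Fubini's theorem applies and gives
\begin{equation*}
\int_\gamma g_n(z)\, dz = \int_0^n f(t) \left( \int_\gamma e^{-zt}\, dz \right) dt = 0,
\end{equation*}
since for each fixed $t$ the integrand $e^{-zt}$ is entire in $z$ and Cauchy's theorem kills the inner integral. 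Morera's theorem then yields analyticity of $g_n$. Next I would show $g_n \to g$ locally uniformly on $\Re z > B$: if $\Re z \geq \sigma > B$, the tail estimate
\begin{equation*}
|g(z) - g_n(z)| \leq \int_n^\infty A e^{-(\sigma - B) t}\, dt = \frac{A}{\sigma - B}\, e^{-(\sigma - B) n}
\end{equation*}
is uniform in $z$ on the closed half plane $\Re z \geq \sigma$ and tends to $0$ as $n \to \infty$. Since $\sigma > B$ was arbitrary, the convergence is locally uniform on $\Re z > B$. A locally uniform limit of analytic functions is analytic, so $g$ is analytic on $\Re z > B$.

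The only subtle point is justifying the Fubini swap in the Morera step, which requires the boundedness of $f$ on $[0,n]$; this is precisely where the hypothesis of piecewise continuity on $[0,a]$ for all $a$ is used. Everything else is routine domination and a tail estimate, fully parallel to the pattern employed in Sections \ref{Section:Continue} and \ref{Section:Phi}.
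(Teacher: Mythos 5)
Your proof is correct, and it handles the analyticity step by a slightly different route than the paper's. The paper applies Morera's theorem directly to $g$: it takes a simple closed curve $\gamma$ in $\Re z > B$, uses compactness of $\gamma$ to get a uniform bound $\frac{A}{\sigma - B}$ on $\int_0^\infty |f(t)e^{-zt}|\,dt$, and then swaps the order of integration over $[0,\infty)\times\gamma$ in one stroke, acknowledging only in a footnote that Fubini over an unbounded interval is delicate in the Riemann setting and pointing to the truncation device of Theorem \ref{Theorem:ZetaExtend}. You instead truncate first, defining $g_n(z)=\int_0^n f(t)e^{-zt}\,dt$, so that your Fubini swap takes place only over the compact set $[0,n]\times\gamma^*$ where $f$ is bounded; you then recover $g$ as a locally uniform limit via the tail estimate $\frac{A}{\sigma-B}e^{-(\sigma-B)n}$ on $\Re z\geq\sigma>B$. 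In effect you have written out the rigorous version that the paper's footnote gestures at, at the cost of one extra limiting step; the paper's version is shorter but leans on an interchange over an unbounded domain that it does not fully justify. One small point common to both arguments: Morera's theorem also requires continuity of the function being tested (of $g_n$ in your case, of $g$ in the paper's), which follows from the same domination estimates but is worth a sentence if you want the argument airtight.
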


\begin{proof}
    For $\Re z > B$, the integral \eqref{eq:Laplace} converges by the comparison test since
    \begin{equation*}
    \int_0^{\infty} |f(t) e^{-zt}|\,dt
    \leq  \int_0^{\infty} Ae^{Bt} e^{-t(\Re z)} \, dt
    = A \int_0^{\infty} e^{t(B-\Re z)}\,dt
    =\frac{ A}{\Re z - B} < \infty.
    \end{equation*}
If $\gamma$ is a simple closed curve in $\Re z > B$, then its compactness ensures that there is a $\sigma > B$
such that $\Re z \geq \sigma$ for all $z \in \gamma$.  Thus,
\begin{equation*}
\int_0^{\infty} |f(t) e^{-zt}|\,dt \leq \frac{A}{\sigma - B}
\end{equation*}
is uniformly bounded for $z \in \gamma$.
Fubini's theorem\footnote{The interval $[0,\infty)$ is unbounded and hence the appeal to Fubini's
theorem is more if one uses Riemann integration; see the proof of Theorem \ref{Theorem:ZetaExtend}.}
and Cauchy's theorem yield
\begin{equation*}
\int_{\gamma} g(z)\,dz
= \int_{\gamma} \int_0^{\infty} f(t) e^{-zt}\,dt \,dz
= \int_0^{\infty} f(t) \left(\int_{\gamma} e^{-zt}\,dz\right)\,dt 
= \int_0^{\infty} f(t) \cdot 0 \,dt 
=0.
\end{equation*}
Morera's theorem implies that $g$ is analytic on $\Re z > B$.
\end{proof}

\begin{theorem}[Laplace Representation of $\Phi$]\label{Theorem:LaplacePhi}
For $\Re s > 1$,
\begin{equation}\label{eq:PhiLap}
\frac{\Phi(s) }{s}
= \int_0^{\infty} \vartheta(e^t)e^{-st}\,dt.
\end{equation}
\end{theorem}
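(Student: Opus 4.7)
The plan is to perform the substitution $u = e^t$ on the right-hand side of \eqref{eq:PhiLap}, which gives $dt = du/u$ and $e^{-st} = u^{-s}$, converting the integral into
\begin{equation*}
\int_1^{\infty} \frac{\vartheta(u)}{u^{s+1}}\,du.
\end{equation*}
Since $\vartheta(u) = \sum_p (\log p)\mathbf{1}_{[p,\infty)}(u)$, I would interchange the sum with the integral and then compute each resulting integral explicitly: $\int_p^{\infty} u^{-s-1}\,du = 1/(sp^s)$. Summing over primes then reproduces $\Phi(s)/s$ on the nose.

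The one nonroutine step is the justification of the interchange of sum and integral, since the domain $[1,\infty)$ is unbounded. For $\Re s \geq \sigma > 1$, Chebyshev's Lemma (Theorem \ref{Theorem:Chebyshev}) supplies the clean bound
\begin{equation*}
\left|\frac{\vartheta(u)}{u^{s+1}}\right| \leq \frac{3u}{u^{\sigma+1}} = \frac{3}{u^{\sigma}},
\end{equation*}
which is integrable on $[1,\infty)$. If Lebesgue machinery is available, this immediately licenses Fubini/Tonelli. For an instructor wishing to stay within Riemann integration, I would truncate: let $\vartheta_N(u) = \sum_{p \leq \min(u,N)}\log p$ and observe that on any finite interval the finite sum may be legitimately interchanged with the integral, yielding
\begin{equation*}
\sum_{p \leq N} \frac{\log p}{sp^s} = \int_1^{\infty} \frac{\vartheta_N(u)}{u^{s+1}}\,du.
\end{equation*}
Passing to $N\to\infty$ is then controlled by the Chebyshev bound applied to $\vartheta - \vartheta_N$ on $[N,\infty)$, in the same spirit as the tail argument used in the proof of Theorem \ref{Theorem:ZetaExtend}.

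A cleaner alternative, if Riemann--Stieltjes integration is available, is to write $\Phi(s) = \int_{1^-}^{\infty} u^{-s}\,d\vartheta(u)$ and integrate by parts to get $\Phi(s) = \lim_{X\to\infty}\bigl(\vartheta(X)X^{-s} + s\int_1^X \vartheta(u)u^{-s-1}\,du\bigr)$; the boundary term vanishes for $\Re s > 1$ again by $\vartheta(X) \leq 3X$. The main obstacle is thus not conceptual but technical: marshaling whichever version of Fubini (or Abel summation) is consistent with the course's prerequisites. Once that step is in place, the theorem falls out by a one-line calculation.
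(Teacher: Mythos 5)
Your argument is correct, but it takes a genuinely different route from the paper. The paper stays at the level of series: it writes $\Phi(s)=\sum_{n\geq 1}\bigl(\vartheta(n)-\vartheta(n-1)\bigr)n^{-s}$, splits this into two sums --- which forces a temporary restriction to $\Re s>2$, where Chebyshev's bound makes $\sum_n \vartheta(n)n^{-s}$ absolutely convergent --- performs the discrete Abel-summation rearrangement, uses $\vartheta(x)=\vartheta(n)$ on $[n,n+1)$ to assemble $s\int_1^{\infty}\vartheta(x)x^{-s-1}\,dx$, and only then extends \eqref{eq:PhiLap} to $\Re s>1$ by combining Theorem \ref{Theorem:LaplaceAnalytic} (with $A=3$, $B=1$) with the identity principle. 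You instead work directly with the integral on all of $\Re s>1$: after the substitution $u=e^t$ you decompose $\vartheta(u)=\sum_p(\log p)\mathbf{1}_{[p,\infty)}(u)$ and interchange sum and integral, the interchange being licensed either by dominated convergence or by your truncation argument, whose tail estimate $\int_N^{\infty}3u^{-\sigma}\,du\to 0$ indeed needs only $\sigma>1$; your Riemann--Stieltjes remark is essentially the continuous form of the paper's Abel summation, with Theorem \ref{Theorem:Chebyshev} killing the boundary term. What your route buys is the elimination of the detour through $\Re s>2$ and the identity principle, giving the identity on $\Re s>1$ in one pass; what the paper's route buys is pedagogical economy --- its computation deliberately mirrors the one in Theorem \ref{Theorem:ZetaExtend}, every interchange is finite or termwise, and the identity-principle step exercises a tool the course wants to showcase. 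The only points worth recording explicitly in your write-up are that $\int_1^{\infty}\vartheta(u)u^{-s-1}\,du$ exists to begin with ($\vartheta$ is a step function, so finite-interval Riemann integrability is clear and the Chebyshev bound controls $u\to\infty$) and that the change of variables $u=e^t$ is carried out on finite truncations before passing to the limit.
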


\begin{proof}
Recall from Theorem \ref{Theorem:Chebyshev} that $\vartheta(x) \leq 3x$.
Thus, for $\Re s > 2$ 
\begin{equation}\label{eq:AbsConv}
\sum_{n=1}^{\infty} \left| \frac{\vartheta(n-1)}{n^s} \right| \leq 
\sum_{n=1}^{\infty} \left| \frac{\vartheta(n)}{n^s} \right| 
\leq \sum_{n=1}^{\infty} \frac{3n}{n^{\Re s}} = 3\sum_{n=1}^{\infty} \frac{1}{n^{(\Re s)-1}} < \infty.
\end{equation}
Consequently,
\begin{align}
\Phi(s)
&= \sum_p \frac{\log p}{p^s}  && (\text{by \eqref{eq:PhiDef}})\nonumber \\
&= \sum_{n=1}^{\infty} \frac{ \vartheta(n) - \vartheta(n-1)}{n^s} && ( \text{by \eqref{eq:ThetaDef}}) \nonumber \\
&= \sum_{n=1}^{\infty} \frac{\vartheta(n)}{n^s} - \sum_{n=2}^{\infty} \frac{\vartheta(n-1)}{n^s} && ( \text{by \eqref{eq:AbsConv}} ) \nonumber\\
&= \sum_{n=1}^{\infty} \frac{\vartheta(n)}{n^s} - \sum_{n=1}^{\infty} \frac{\vartheta(n)}{(n+1)^s} \nonumber \\
&= \sum_{n=1}^{\infty} \vartheta(n)\left( \frac{1}{n^s} - \frac{1}{(n+1)^s} \right)\nonumber \\
&= \sum_{n=1}^{\infty} \vartheta(n) \left(s\int_n^{n+1} \frac{dx}{x^{s+1}}\right) \nonumber \\
&= s \sum_{n=1}^{\infty}  \int_n^{n+1} \frac{\vartheta(n)\,dx}{x^{s+1}} \nonumber \\
&= s \sum_{n=1}^{\infty}  \int_n^{n+1} \frac{\vartheta(x)\,dx}{x^{s+1}} && 
(\text{$\vartheta(x) = \vartheta(n)$ on $[n,n+1)$})\nonumber \\
&= s\int_1^{\infty} \frac{\vartheta(x)\,dx}{x^{s+1}} \nonumber \\
&= s \int_0^{\infty} \frac{\vartheta(e^t) e^t\,dt}{e^{st+t}} && (\text{$x=e^t$ and $dx= e^t\,dt$})\nonumber \\
&= s \int_0^{\infty} \vartheta(e^t)e^{-st}\,dt .\label{eq:Halfway}
\end{align}
This establishes the desired identity \eqref{eq:PhiLap} for $\Re s > 2$.
Since Theorem \ref{Theorem:Chebyshev} implies $\vartheta(e^t)  \leq 3 e^t$,
Theorem \ref{Theorem:LaplaceAnalytic} (with $A=3$ and $B=1$) ensures that \eqref{eq:Halfway} is analytic on $\Re s > 1$.
On the other hand, $\Phi(s)$ is analytic on $\Re s > 1$ so the
identity principle implies that the desired representation \eqref{eq:PhiLap} holds for $\Re s > 1$.
\end{proof}

\section{Newman's Tauberian Theorem}\label{Section:Newman}

The following theorem is a tour-de-force of undergraduate-level complex analysis.
In what follows, observe that $g$ is the Laplace transform of $f$.  The hypotheses upon $f$
ensure that we will be able to apply the theorem to the Chebyshev theta function.

\begin{theorem}[Newman's Tauberian Theorem]\label{Theorem:Newman}
Let $f:[0,\infty)\to \C$ be a bounded function that is piecewise continuous on $[0,a]$ for each $a >0$.
For $\Re z > 0$, let
\begin{equation*}
g(z) = \int_0^{\infty} f(t) e^{-zt}\,dt
\end{equation*}
and suppose $g$ has an analytic continuation to a neighborhood of $\Re z \geq 0$.  Then 
\begin{equation*}
g(0) = \lim_{T\to\infty} \int_0^T f(t)\,dt.
\end{equation*}
In particular, $\int_0^{\infty} f(t)\,dt$ converges.
\end{theorem}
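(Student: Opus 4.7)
The plan is to follow the elegant contour-integral argument of Zagier--Newman. For $T > 0$, set
\[
g_T(z) = \int_0^T f(t) e^{-zt}\,dt,
\]
which, being the integral of an entire function over a compact interval, is entire in $z$. The goal is to show $g_T(0) \to g(0)$ as $T \to \infty$. Since $g$ extends analytically to an open neighborhood of $\Re z \geq 0$, for each $R > 0$ there is a $\delta = \delta(R) > 0$ such that $g$ is analytic on the closed region $\Omega_R = \{|z| \leq R,\ \Re z \geq -\delta\}$ (this uses compactness of the left semicircle of $|z|=R$). Let $C = \partial\Omega_R$, oriented counterclockwise.

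The heart of the argument is to apply Cauchy's integral formula with a cleverly chosen kernel:
\[
g(0) - g_T(0) = \frac{1}{2\pi i} \oint_C \bigl(g(z) - g_T(z)\bigr) e^{zT} \left(1 + \frac{z^2}{R^2}\right) \frac{dz}{z}.
\]
The kernel $K(z) = e^{zT}(1 + z^2/R^2)/z$ is engineered so that on the circle $|z| = R$, writing $z = Re^{i\theta}$, one has $|(1 + z^2/R^2)/z| = 2|\cos\theta|/R = 2|\Re z|/R^2$, which is small precisely where $|e^{zT}|$ might be large. I would then split $C$ into its right portion $C_+$ (the right semicircle $|z|=R$, $\Re z \geq 0$) and the left portion $C_-$ (the rest of $C$, living in $\Re z \leq 0$).

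On $C_+$, both $g$ and $g_T$ are represented by their integrals, so $|g(z) - g_T(z)| = |\int_T^\infty f(t) e^{-zt}\,dt| \leq M e^{-T\Re z}/\Re z$, where $M = \sup|f|$. Combining with the $\Re z$ factor supplied by the kernel yields an integrand of size $O(M/R^2)$, so the $C_+$ contribution is bounded by $M/R$, independent of $T$. On $C_-$, I split the integrand into its $g$-part and $g_T$-part. For the $g$-part, $g$ is bounded on the compact set $C_-$ and $|e^{zT}| = e^{T\Re z} \to 0$ uniformly as $T \to \infty$ (since $\Re z < 0$ there, away from a neighborhood of $0$; at $z = 0$ the integrand is harmless because the simple pole of $1/z$ is cancelled by the vanishing of $g(z) - g_T(z)$ near there — actually more simply because near $z=0$, $\Re z$ is controlled by $\delta$ and the integrand is bounded). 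For the $g_T$-part, the key trick is that $g_T$ is \emph{entire}, so I deform $C_-$ to the left semicircle $C_-' = \{|z| = R,\ \Re z \leq 0\}$. On $C_-'$, the bound $|g_T(z)| \leq M e^{-T\Re z}/|\Re z|$ (obtained directly from the definition since $\Re z < 0$) combines with the kernel to give the same uniform bound of $M/R$.

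Putting the pieces together, $\limsup_{T\to\infty} |g(0) - g_T(0)| \leq 2M/R$, and since $R$ is arbitrary, $g_T(0) \to g(0)$. The main obstacle I foresee is verifying carefully that the choice of the kernel $K(z)$ yields the stated cancellation on $|z|=R$, and managing the $C_-$ portion: one must remember that the $g$-part and $g_T$-part are estimated separately (the $g_T$-part via contour deformation onto $C_-'$), as they cannot be combined directly because $g(z) - g_T(z)$ has no useful representation on the left half-plane. A subtlety to flag is that Fubini is implicitly used to justify differentiating/integrating $g_T$, which is legitimate because $f$ is bounded and the integral defining $g_T$ is over a compact interval.
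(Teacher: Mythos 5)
Your strategy is exactly the paper's (Newman--Zagier): same kernel $e^{zT}\left(1+\frac{z^2}{R^2}\right)\frac{1}{z}$, same contour $C_R$ obtained by cutting the circle $|z|=R$ with the line $\Re z=-\delta$, same right-half estimate, and the same deformation of the $g_T$-part onto the left semicircle using that $g_T$ is entire. The one step that genuinely fails as you justify it is the $g$-part of the integral over the left portion $C_R^-$. You assert that $|e^{zT}|=e^{T\Re z}\to 0$ \emph{uniformly} on $C_R^-$ because $\Re z<0$ there, ``away from a neighborhood of $0$.'' That is false: $C_R^-=C_R\cap\{\Re z\le 0\}$ consists of the vertical chord at $\Re z=-\delta$ \emph{together with} the two short arcs of $|z|=R$ joining that chord to the points $\pm iR$, and on those arcs $\Re z$ runs through $[-\delta,0]$, equaling $0$ at the endpoints $\pm iR$. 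Hence $e^{T\Re z}=1$ at $\pm iR$ and the convergence to $0$ is only pointwise, so ``bounded times uniformly small'' does not close the estimate. (Your parenthetical about $z=0$ is a red herring: $0$ lies inside the contour, not on it, and since you estimate the $g$- and $g_T$-parts separately there is no cancellation to invoke; on $C_R^-$ one simply has $|z|\ge\delta$, so the kernel is bounded there.)

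The claim you need --- that for each fixed $R$ the $g$-part of the $C_R^-$ integral tends to $0$ as $T\to\infty$ --- is true but needs an extra idea. With Lebesgue integration one can cite dominated convergence. With only Riemann integration (the paper's setting), one trims from the two ends of $C_R^-$ arcs of length $\epsilon\delta/(4M)$, where $M=\sup_{z\in C_R^-}|g(z)|$: on those trimmed arcs the integrand is bounded by $2M/\delta$, so their contribution is at most $\epsilon$; on the remaining compact piece one has $\Re z\le-\rho<0$ for some $\rho>0$, so its contribution is $O(e^{-\rho T})\to 0$. Letting $\epsilon\downarrow 0$ gives that the limit superior is $0$, and the rest of your argument (the bound $2\norm{f}_{\infty}/R$ with $R$ arbitrary) then completes the proof exactly as in the paper.
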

\begin{proof}
For each $T \in (0,\infty)$, let
\begin{equation}\label{eq:GTdef}
g_T(z) = \int_0^T e^{-zt}f(t)\,dt.
\end{equation}
The proof of Theorem \ref{Theorem:LaplaceAnalytic} ensures that each $g_T(z)$ is an entire function
(see Remark \ref{Remark:Entire} for another approach)
and $g(z)$ is analytic on $\Re z > 0$; see Remark \ref{Remark:Entire}.
We must show 
\begin{equation}\label{eq:g0}
\lim_{T\to\infty}  g_T(0) = g(0).
\end{equation}

\noindent\textsc{Step 1.}
Let $\norm{f}_{\infty} = \sup_{t \geq 0}|f(t)|$, which is finite by assumption.
For $\Re z > 0$,
\begin{align}
|g(z) - g_T(z)|
&= \left| \int_0^{\infty} e^{-zt}f(t)\,dt - \int_0^T e^{-zt}f(t)\,dt \right| 
= \left| \int_T^{\infty} e^{-zt}f(t)\,dt  \right| \nonumber\\
&\leq  \int_T^{\infty} e^{-\Re (zt)}| f(t)|\,dt   
\leq \norm{f}_{\infty} \int_T^{\infty} e^{-t\Re z} \,dt   \nonumber\\
&= \norm{f}_{\infty} \frac{e^{-T \Re z} }{\Re z}. \label{eq:Rz1}
\end{align}

\noindent\textsc{Step 2.}
For $\Re z < 0$,
\begin{align}
|g_T(z)|
&= \left| \int_0^T e^{-zt}f(t)\,dt  \right| 
\leq  \int_0^T e^{-\Re(zt)}|f(t)|\,dt  \nonumber \\
&\leq \norm{f}_{\infty} \int_0^T e^{- t \Re z}\,dt 
\leq \norm{f}_{\infty} \int_{-\infty}^T e^{- t \Re z}\,dt \nonumber\\
&= \norm{f}_{\infty} \frac{e^{-T \Re z}}{|\Re z|}.\label{eq:Rz2}
\end{align}

\noindent\textsc{Step 3.}
Suppose that $g$ has an analytic continuation to an open region $\Omega$
that contains the closed half plane $\Re z \geq 0$. Let $R > 0$ and
let $\delta_R > 0$ be small enough to ensure that $g$ is analytic on an open region
that contains the curve $C_R$ (and its interior) formed by intersecting the circle $|z|=R$
with the vertical line $\Re z = -\delta_R$; see Figure \ref{Figure:CR}.

\begin{figure}
\begin{tikzpicture}
\draw[black, dashed, thin, fill=yellow, opacity=0.15] (0,1) circle (1.6 cm);
\draw[black, dashed, thin, fill=yellow, opacity=0.15] (0,-1) circle (1.6 cm);
\fill[green,opacity=.1] (0,-3) rectangle (3,3);

\draw[<->] (-3,0) -- (3,0) coordinate (xaxis);
\draw[<->] (0,-3) -- (0,3) coordinate (yaxis);


\begin{scope}[very thick,decoration={
    markings,
    mark=between positions 0.05 and 0.95 step 0.15 with {\arrow{>}}}
    ] 
\path[draw,very thick,postaction={decorate},blue] (-1,1.732050808) -- (-1,-1.732050808) arc (-120:120:2);
\end{scope}
\node[blue] at (2,1.25) {\large $C_{R}$};
\node at (-1.4,-0.25) {\small$-\delta_{R}$};
\node at (2.25,-0.25) {\small$R$};
\node at (0.3,2.25) {\small$iR$};
\node at (0.35,-2.25) {\small$-iR$};

\end{tikzpicture}
\caption{The contour $C_R$.  The imaginary line segment $[-iR,iR]$ is compact
and can be covered by finitely many open disks (yellow) upon which $g$ is analytic.  Thus, there is a 
$\delta_R > 0$ such that $g$ is analytic on an open region
that contains the curve $C_R$.}
\label{Figure:CR}
\end{figure}
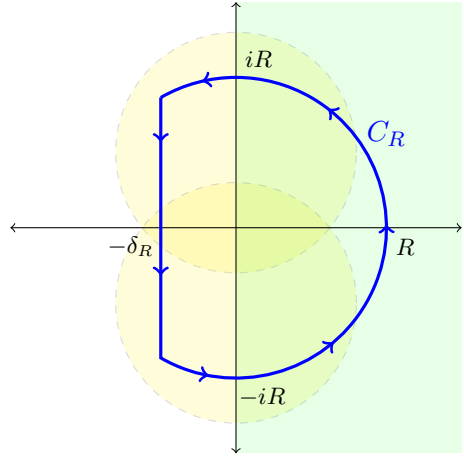

\noindent\textsc{Step 4.}
For each $R > 0$, Cauchy's integral formula implies 
\begin{equation}\label{eq:NewmanTrick}
g_T(0) - g(0)
= \frac{1}{2\pi i} \int_{C_R} \big(g_T(z) - g(z) \big) e^{zT}\left(1 + \frac{z^2}{R^2} \right) \frac{dz}{z}.
\end{equation}
We examine the contributions to this integral over the two curves
\begin{equation*}
C_R^+ = C_R \cap \{z : \Re z \geq 0\}
\qquad \text{and} \qquad 
C_R^- = C_R \cap \{z : \Re z \leq 0\}.
\end{equation*}

\noindent\textsc{Step 5.}
Let us examine the contribution of $C_R^+$ to \eqref{eq:NewmanTrick}.
For $z = Re^{it}$,
\begin{align}
\left|\frac{1}{z}\left(1 + \frac{z^2}{R^2} \right) \right|
&= \left| \frac{1}{z} + \frac{z}{R^2} \right| 
= \left| \frac{1}{Re^{it}} + \frac{Re^{it}}{R^2} \right| \nonumber\\
&= \frac{1}{R^2} | Re^{-it} + Re^{it} | 
= \frac{1}{R^2} | \overline{z} + z | \nonumber\\
&= \frac{2 |\Re z|}{R^2}.\label{eq:CircleEstimate}
\end{align}
For $z \in \C$, 
\begin{equation}\label{eq:MidThing}
|e^{zT}| = e^{T \Re z}
\end{equation}
and hence \eqref{eq:Rz1}, \eqref{eq:CircleEstimate}, and \eqref{eq:MidThing} imply
\begin{align}
&\left| \frac{1}{2\pi i}\int_{C_R^+} \big(g_T(z) - g(z) \big) e^{zT}\left(1 + \frac{z^2}{R^2} \right) \frac{dz}{z} \right| \nonumber\\
&\qquad\leq \frac{1}{2\pi} \underbrace{\left(\norm{f}_{\infty} \frac{e^{-T \Re z} }{\Re z} \right)}_{\text{by \eqref{eq:Rz1}}}
\underbrace{(e^{T \Re z})}_{\text{by \eqref{eq:MidThing}}} 
\underbrace{\left(\frac{2 |\Re z|}{R^2} \right)}_{\text{by \eqref{eq:CircleEstimate}}}(\pi R) \nonumber \\[-5pt]
&\qquad = \frac{\norm{f}_{\infty}}{R}. \label{eq:CR+}
\end{align}

\noindent\textsc{Step 6a.}
We examine the contribution of $C_R^-$ to \eqref{eq:NewmanTrick} in two steps.
Since the integrand in the following integral is analytic in $\Re z < 0$,  we can replace the contour $C_R^-$ with the left-hand side of the circle $|z| = R$ in the computation
\begin{align}
&\left| \frac{1}{2\pi i}\int_{C_R^-} g_T(z) e^{zT}\left(1 + \frac{z^2}{R^2} \right) \frac{dz}{z} \right| \label{eq:IntAnCu} \\
&\qquad=\left| \frac{1}{2\pi i}\int_{\substack{|z|=R\\ \Re z \leq 0}} g_T(z) e^{zT}\left(1 + \frac{z^2}{R^2} \right) \frac{dz}{z} \right| \nonumber \\
&\qquad\leq \frac{1}{2\pi} 
\underbrace{\left(\norm{f}_{\infty} \frac{e^{-T \Re z}}{|\Re z|}\right)}_{\text{by \eqref{eq:Rz2}}}
(e^{T \Re z})
\underbrace{\left(\frac{2 |\Re z|}{R^2}\right)}_{\text{by \eqref{eq:CircleEstimate}}} (\pi R) \nonumber \\
&\qquad=\frac{\norm{f}_{\infty}}{R}; \label{eq:CR-}
\end{align}
see Figure \ref{Figure:Shift}.  

\begin{figure}
\begin{tikzpicture}
\fill[green,opacity=.1] (-3,-3) rectangle (0,3);

\draw[<->] (-3,0) -- (3,0) coordinate (xaxis);
\draw[<->] (0,-3) -- (0,3) coordinate (yaxis);

\begin{scope}[very thick,decoration={
    markings,
    mark=between positions 0.1 and 0.9 step 0.4 with {\arrow{>}}}
    ] 
\path[draw,very thick,postaction={decorate},red] (-1,1.732050808) arc (120:240:2);
\end{scope}

\begin{scope}[very thick,decoration={
    markings,
    mark=between positions 0.29 and 0.7 step 0.4 with {\arrow{>}}}
    ] 
\path[draw,very thick,postaction={decorate},blue, dashed] (-1,1.732050808) -- (-1,-1.732050808);
\end{scope}

\begin{scope}[very thick,decoration={
    markings,
    mark=between positions 0.1 and 0.8 step 0.2 with {\arrow{>}}}
    ] 
\path[draw,very thick,postaction={decorate},blue] (-1,-1.732050808) arc (-120:120:2);
\end{scope}

\node[blue] at (2,1.25) {\large $C_{R}$};
\node at (-1.4,-0.25) {\small$-\delta_{R}$};
\node at (2.25,-0.25) {\small$R$};
\node at (0.3,2.25) {\small$iR$};
\node at (0.35,-2.25) {\small$-iR$};

\end{tikzpicture}
\caption{The integrand in \eqref{eq:IntAnCu} is analytic in $\Re z < 0$.  Cauchy's theorem ensures that the integral
over $C_R^-$ equals the integral over the semicircle $\{ z : |z| = R, \Re z \leq 0\}$.}
\label{Figure:Shift}
\end{figure}
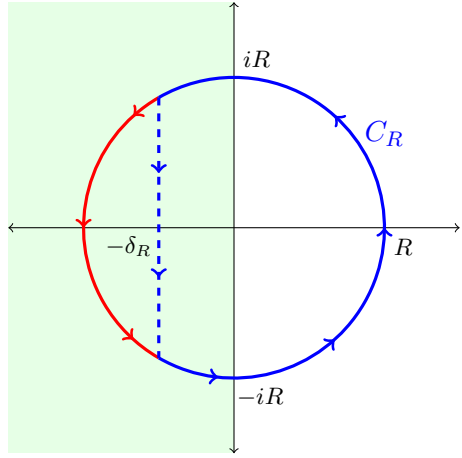

\noindent\textsc{Step 6b.}
Next we focus on the corresponding integral with $g$ in place of $g_T$.
Let $$M = \sup_{z \in C_R^-} |g(z)|,$$ which is finite since $C_R^-$ is compact.  
Since $|z| \geq \delta_R$ for $z \in C_R^-$,
\begin{equation*}
\left| g(z) e^{zT}\left(1 + \frac{z^2}{R^2} \right) \frac{1}{z} \right|
\leq \frac{2M e^{T \Re z} }{\delta_R}.
\end{equation*}
Fix $\epsilon > 0$ and obtain a curve $C_R^-(\epsilon)$
by removing, from the beginning and end of $C_R^-$, two 
arcs each of length $\epsilon\delta_R/(4M)$; see Figure \ref{Figure:Correct}.  Then there is a $\rho >0$ such that
$\Re z < -\rho$ for each $z \in C_{R}^-(\epsilon)$.  Consequently,
\begin{equation*}
\limsup_{T\to\infty}\left| \int_{C_R^-} g(z) e^{zT}\left(1 + \frac{z^2}{R^2} \right) \frac{dz}{z} \right| 
\leq \limsup_{T\to\infty} \bigg( \underbrace{\frac{2M e^{-\rho T} }{\delta_R}\cdot \pi R }_{\text{from $C_R^-(\epsilon)$}} 
+ \underbrace{\frac{2M }{\delta_R} \cdot \frac{2\epsilon \delta_R}{4M} }_{\text{from the two arcs}}  \bigg) = \epsilon.
\end{equation*}
Since $\epsilon > 0$ was arbitrary,
\begin{equation}\label{eq:CRF}
\limsup_{T\to\infty}\left| \int_{C_R^-} g(z) e^{zT}\left(1 + \frac{z^2}{R^2} \right) \frac{dz}{z} \right| =0.
\end{equation}

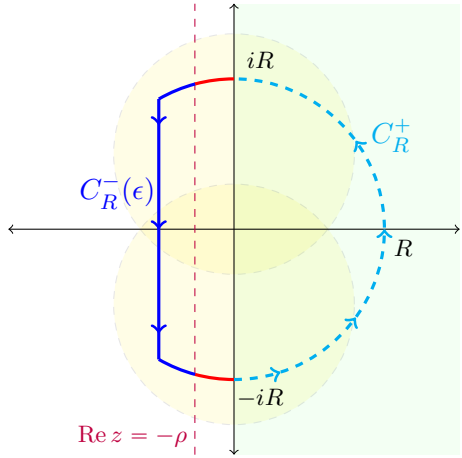
\begin{figure}
\begin{tikzpicture}
\draw[black, dashed, thin, fill=yellow, opacity=0.1] (0,1) circle (1.6 cm);
\draw[black, dashed, thin, fill=yellow, opacity=0.1] (0,-1) circle (1.6 cm);
\fill[green,opacity=.05] (0,-3) rectangle (3,3);

\draw[<->] (-3,0) -- (3,0) coordinate (xaxis);
\draw[<->] (0,-3) -- (0,3) coordinate (yaxis);
\draw[dashed, purple] (-0.52,-3) -- (-0.52,3) coordinate (yaxis);

\begin{scope}[very thick,decoration={
    markings,
    mark=between positions 0.1 and 0.9 step 0.4 with {\arrow{>}}}
    ] 
\path[draw,very thick,postaction={decorate},blue] (-1,1.732050808) -- (-1,-1.732050808);
\end{scope}

\begin{scope}[very thick,decoration={
    markings,
    mark=between positions 0.1 and 0.8 step 0.2 with {\arrow{>}}}
    ] 
\path[draw,very thick,postaction={decorate},cyan, dashed] (0,-2) arc (-90:90:2);
\end{scope}

\path[draw,very thick,red] (0,2) arc (90:105:2);
\path[draw,very thick,red] (0,-2) arc (-90:-105:2);
\path[draw,very thick,blue] (-1,1.732050808)  arc (120:105:2);
\path[draw,very thick,blue] (-1,-1.732050808)  arc (-120:-105:2);

\node[cyan] at (2.1,1.25) {\large $C_{R}^+$};
\node[blue] at (-1.55,0.5) {\large $C_{R}^-(\epsilon)$};
\node at (2.25,-0.25) {\small$R$};
\node at (0.35,2.25) {\small$iR$};
\node[purple] at (-1.35,-2.75) {\small$\Re z = -\rho$};
\node at (0.35,-2.25) {\small$-iR$};

\end{tikzpicture}
\caption{$C_{R}^-(\epsilon)$ is obtained from $C_R^-$ by removing two segments (red) each of length $\epsilon\delta_R/(4M)$.
There is a $\rho > 0$ such that $\Re z < -\rho$ for each $z \in C_{R}^-(\epsilon)$.
}
\label{Figure:Correct}
\end{figure}

\noindent\textsc{Step 7.}
For each fixed $R > 0$,
\begin{align*}
&\limsup_{T\to 0} |g_T(0) - g(0)| \\
&\qquad=\limsup_{T\to 0} \left|  \frac{1}{2\pi i} \int_{C_R} \big(g_T(z) - g(z) \big) e^{zT}\left(1 + \frac{z^2}{R^2} \right) \frac{dz}{z} \right| && ( \text{by \eqref{eq:NewmanTrick}} )\\
&\qquad \leq \underbrace{\frac{\norm{f}_{\infty}}{R}}_{\text{from $C_R^+$}} + 
\underbrace{\left(\frac{\norm{f}_{\infty}}{R} + 0\right) }_{\text{from $C_R^-$}}  && (\text{by \eqref{eq:CR+}, \eqref{eq:CR-}, \eqref{eq:CRF}})\\
&\qquad= \frac{2 \norm{f}_{\infty}}{R}.
\end{align*}
Since $R >0$ was arbitrary, 
\begin{equation*}
\limsup_{T\to \infty} |g_T(0) - g(0)|  = 0;
\end{equation*}
that is,
$\lim_{T\to\infty} g_T(0) = g(0)$.
\end{proof}

\begin{remark}
A ``Tauberian theorem'' is a result in which a convergence result
is deduced from a weaker convergence result and an additional hypothesis.
The phrase originates in the work of G.H.~Hardy and J.E.~Littlewood, who coined the term
in honor of A.~Tauber.
\end{remark}

\begin{remark}\label{Remark:Entire}
To see that $g_T(z)$ entire, first note that since we are integrating over $[0,T]$ there are no convergence issues.  We may let $\gamma$ be any simple closed curve in $\C$ when we mimic the proof of Theorem \ref{Theorem:LaplaceAnalytic}.  Another approach is to expand $e^{-zt}$ as a power series and use the uniform
convergence of the series on $[0,T]$ to exchange the order of sum and integral.  This yields a power series
expansion of $g_T(z)$ with infinite radius of convergence.  Here are the details.
    Fix $T > 0$ and let 
    \begin{equation*}
        M = \sup_{0\leq t \leq T}|f(t)|,
    \end{equation*}
    which is finite since $[0,T]$ is compact and $f$ is piecewise continuous
    (a piecewise-continuous function has at most finitely many discontinuities, all of which are 
    jump discontinuities).  Then
    \begin{equation*}
        c_n = \int_0^T f(t)  t^n\,dt
        \quad \text{satisfies} \quad 
        |c_n|\leq  \frac{MT^{n+1}}{n+1}.
    \end{equation*}
    Since $e^z$ is entire, its power series representation converges uniformly on
    $[0,T]$.  Thus,
    \begin{align*}
        g_T(z) 
        &= \int_0^T f(t)e^{-zt}\,dt   
        = \int_0^T f(t) \bigg(\sum_{n=0}^{\infty} \frac{(-zt)^n}{n!} \bigg)\,dt \\
        &= \sum_{n=0}^{\infty} \frac{(-1)^n z^n}{n!} \int_0^T f(t)  t^n\,dt 
        = \sum_{n=0}^{\infty} \frac{(-1)^n  c_n}{n!}z^n 
    \end{align*}
    defines an entire function since its radius of convergence is the reciprocal of
    \begin{equation*}
        \limsup_{n\to\infty} \left| \frac{(-1)^n  c_n}{n!}\right|^{\frac{1}{n}}
        \leq  \limsup_{n\to\infty} \frac{M^{\frac{1}{n}} T^{\frac{n+1}{n}}}{ (n+1)^{\frac{1}{n}}(n!)^{\frac{1}{n}}}
        = \frac{1\cdot T}{1 \cdot \infty} = 0
    \end{equation*}
    by the Cauchy--Hadamard formula.
\end{remark}

\begin{remark}
Step 6b is more complicated than in most presentations because we are using 
the Riemann integral (for the sake of accessibility) instead of the Lebesgue integral.  
The statement \eqref{eq:CRF} follows immediately from the Fatou--Lebesgue theorem in Lebesgue theory; see the proof in \cite{Simon}.
Riemann integration theory cannot prove \eqref{eq:CRF} directly since the integrand does not convergence
uniformly to zero on $C_R^-$.
\end{remark}

\section{An Improper Integral}\label{Section:Improper}
    Things come together in the following lemma.  
    We have done most of the difficult work already; the proof of Lemma \ref{Lemma:Converge}
    amounts to a series of strategic applications of existing results.
    It requires Chebyshev's estimate for $\vartheta(x)$ (Theorem \ref{Theorem:Chebyshev}),
    the analytic continuation of 
    $\Phi(s) - (s-1)^{-1}$ to an open neighborhood of $\Re s \geq 1$ (Theorem \ref{Theorem:PhiExtend}),
    the Laplace-transform representation of $\Phi(s)$ (Theorem \ref{Theorem:LaplacePhi}), and 
    Newman's theorem (Theorem \ref{Theorem:Newman}).
    
    \begin{lemma}\label{Lemma:Converge}
        $\displaystyle \int_1^{\infty} \frac{\vartheta(x)-x}{x^2}\,dx$ converges.
    \end{lemma}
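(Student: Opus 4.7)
The plan is to invoke Newman's Tauberian theorem (Theorem \ref{Theorem:Newman}) on the function obtained from $\vartheta$ after the substitution $x = e^t$. Setting $x = e^t$, $dx = e^t\,dt$ rewrites the integral in question as
\begin{equation*}
\int_1^{\infty} \frac{\vartheta(x)-x}{x^2}\,dx
= \int_0^{\infty} \bigl(\vartheta(e^t)e^{-t} - 1\bigr)\,dt,
\end{equation*}
so it suffices to show that $\int_0^\infty f(t)\,dt$ converges, where $f(t) = \vartheta(e^t)e^{-t} - 1$. I would first check the hypotheses of Newman's theorem on $f$: piecewise continuity on every $[0,a]$ is immediate because $\vartheta$ is a step function, and boundedness follows from Chebyshev's lemma (Theorem \ref{Theorem:Chebyshev}): $0 \leq \vartheta(e^t) \leq 3e^t$ gives $-1 \leq f(t) \leq 2$.

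Next I would compute the Laplace transform of $f$. For $\Re z > 0$, splitting the integral and applying Theorem \ref{Theorem:LaplacePhi} with $s = z+1$ (which has $\Re s > 1$) yields
\begin{equation*}
g(z) = \int_0^{\infty} f(t)e^{-zt}\,dt
= \int_0^\infty \vartheta(e^t)e^{-(z+1)t}\,dt - \int_0^\infty e^{-zt}\,dt
= \frac{\Phi(z+1)}{z+1} - \frac{1}{z}.
\end{equation*}

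The crux of the argument is to show that $g$ admits an analytic continuation to an open neighborhood of $\Re z \geq 0$. Away from $z = 0$, each term is analytic in an open neighborhood of $\Re z \geq 0$ by Theorem \ref{Theorem:PhiExtend} (noting $z+1$ does not vanish there). The only issue is the apparent singularity at $z=0$, and here I would use Theorem \ref{Theorem:PhiExtend} in the form $\Phi(s) - \tfrac{1}{s-1} = H(s)$ with $H$ analytic near $s=1$; translating $s = z+1$ gives $\Phi(z+1) = \tfrac{1}{z} + H(z+1)$ near $z=0$, so
\begin{equation*}
g(z) = \frac{\tfrac{1}{z} + H(z+1)}{z+1} - \frac{1}{z}
= \frac{H(z+1)}{z+1} + \frac{1}{z}\!\left(\frac{1}{z+1} - 1\right)
= \frac{H(z+1)}{z+1} - \frac{1}{z+1},
\end{equation*}
which is manifestly analytic at $z = 0$. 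I expect this removal of the $\tfrac{1}{z}$ singularity to be the main conceptual step — it is precisely where the pole of $\zeta$ at $s=1$ (which produced the counterterm $-\tfrac{1}{z}$) cancels against the corresponding pole of $\Phi$.

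With all hypotheses of Theorem \ref{Theorem:Newman} verified, the theorem yields convergence of $\int_0^\infty f(t)\,dt$, and reversing the substitution gives the convergence of $\int_1^\infty \frac{\vartheta(x)-x}{x^2}\,dx$ as claimed.
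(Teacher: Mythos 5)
Your proposal is correct and follows essentially the same route as the paper: substitute $x=e^t$, verify via Chebyshev's lemma that $f(t)=\vartheta(e^t)e^{-t}-1$ is bounded and piecewise continuous, identify its Laplace transform as $\frac{\Phi(z+1)}{z+1}-\frac{1}{z}$ using Theorem \ref{Theorem:LaplacePhi}, continue it analytically past $\Re z = 0$ via Theorem \ref{Theorem:PhiExtend}, and conclude with Newman's Tauberian theorem. Your explicit cancellation $g(z)=\frac{H(z+1)-1}{z+1}$ just spells out the algebraic step the paper leaves implicit, and your bound $-1\leq f(t)\leq 2$ is a slightly sharper version of the paper's $|f(t)|\leq 4$.
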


    \begin{proof}
        Define $f:[0,\infty)\to \C$ by
        \begin{equation*}
            f(t) = \vartheta(e^t)e^{-t}-1
        \end{equation*}
        and observe that it is piecewise continuous on $[0,a]$ for all $a>0$ and
        \begin{equation*}
            |f(t)| \leq |\vartheta(e^t)| e^{-t} + 1
            \leq 4
        \end{equation*}
        for all $t \geq 0$
        by Theorem \ref{Theorem:Chebyshev}.  
        Then Theorem \ref{Theorem:LaplaceAnalytic} with $A=4$ and $B=0$ ensures that the Laplace transform
        of $f$ is analytic on $\Re z > 0$.  Consequently, for $\Re z > 0$
        \begin{align}
            \int_0^{\infty} f(t) e^{-zt}\,dt
            &=\int_0^{\infty} \big( \vartheta(e^t)e^{-t}-1 \big) e^{-zt}\,dt \nonumber\\
            &=  \int_0^{\infty} \big(\vartheta(e^t) e^{-(z+1)t} - e^{-zt}\big)\,dt \nonumber\\
            &=  \int_0^{\infty}\vartheta(e^t)e^{-(z+1)t} \,dt - \int_0^{\infty} e^{-zt}\,dt \label{eq:ThisStepThing}\\
            &=  \int_0^{\infty}\vartheta(e^t)e^{-(z+1)t} \,dt - \frac{1}{z} \nonumber\\
            &= \frac{\Phi(z+1)}{z+1} - \frac{1}{z} && ( \text{by Theorem \ref{Theorem:LaplacePhi}}). \nonumber
        \end{align}
        Let $z = s-1$ and note that
        Theorem \ref{Theorem:PhiExtend} implies that
        \begin{equation*}
            g(z) = \frac{\Phi(z+1)}{z+1} - \frac{1}{z} = \frac{\Phi(s)}{s} - \frac{1}{s-1}
        \end{equation*}
        extends analytically to an open neighborhood of $\Re s \geq 1$;
        that is, to an open neighborhood of the closed half plane $\Re z \geq 0$.
        Theorem \ref{Theorem:Newman} ensures that the improper integral
        \begin{align*}
            \int_0^{\infty} f(t)\,dt
            &= \int_0^{\infty} \big(\vartheta(e^t)e^{-t}-1\big)\,dt \\
            &= \int_1^{\infty} \bigg(\frac{\vartheta(x)}{x}-1\bigg)\,\frac{dx}{x} &&(\text{$x = e^t$ and $dx = e^t\,dt$})\\
            &= \int_1^{\infty} \frac{\vartheta(x)-x}{x^2}\,dx
        \end{align*}
        converges.
    \end{proof}
    
    \begin{remark}
        Newman's theorem implies that the improper integral in Lemma \ref{Lemma:Converge} equals $g(0)$ although this is not necessary for our purposes.
    \end{remark}
    
    \begin{remark}
        Since $|\vartheta(e^t)| \leq 3e^t$ by Theorem \ref{Theorem:Chebyshev},
        the first improper integral \eqref{eq:ThisStepThing}
        converges and defines an analytic function on $\Re z > 0$ 
        by Theorem \ref{Theorem:LaplaceAnalytic} with $A = 3$ and $B = 1$.
        We did not mention this in the proof of Lemma \ref{Lemma:Converge}
        because the convergence of the integral is already guaranteed by the convergence of 
        $\int_0^{\infty} f(t) e^{-zt}\,dt$ and
        $\int_0^{\infty} e^{-zt}\,dt$.
    \end{remark}

\section{Asymptotic Behavior of $\vartheta(x)$}\label{Section:ThetaAsymptotic}
A major ingredient in the proof of the prime number theorem is the following asymptotic
statement.  Students must be comfortable with limits superior and inferior after this point; these concepts are used
frequently throughout what follows.

\begin{theorem}\label{Theorem:Thetax}
    $\vartheta(x) \sim x$.
\end{theorem}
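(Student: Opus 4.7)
The plan is to deduce $\vartheta(x)\sim x$ from the convergence of $\int_1^{\infty}(\vartheta(x)-x)/x^2\,dx$ (Lemma \ref{Lemma:Converge}) by a contradiction argument that exploits the monotonicity of $\vartheta$. Specifically, if $\vartheta(x)/x$ ever exceeds $1$ by a fixed amount (or falls below $1$ by a fixed amount) on arbitrarily large $x$, then the Cauchy criterion for the convergent improper integral must fail on a block of the form $[x,\lambda x]$ or $[\lambda x, x]$. Since $\vartheta$ is a sum of $\log p$ over primes $p\le x$, it is nondecreasing, and this is the only property of $\vartheta$ used beyond what Lemma \ref{Lemma:Converge} has already extracted.

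For the $\limsup$ case, suppose $\limsup_{x\to\infty}\vartheta(x)/x>1$, so there is a $\lambda>1$ and a sequence $x_n\to\infty$ with $\vartheta(x_n)\ge \lambda x_n$. For $t\in[x_n,\lambda x_n]$, monotonicity gives $\vartheta(t)\ge \vartheta(x_n)\ge \lambda x_n\ge t\cdot(\lambda x_n/t)$, and the substitution $u=t/x_n$ yields
\begin{equation*}
\int_{x_n}^{\lambda x_n}\frac{\vartheta(t)-t}{t^2}\,dt \;\ge\; \int_{x_n}^{\lambda x_n}\frac{\lambda x_n - t}{t^2}\,dt \;=\; \int_1^{\lambda}\frac{\lambda-u}{u^2}\,du \;=\; \lambda-1-\log\lambda \;>\;0.
\end{equation*}
The right-hand side is a positive constant independent of $n$, contradicting the Cauchy criterion for the convergent integral, which forces $\int_{x_n}^{\lambda x_n}(\vartheta(t)-t)/t^2\,dt\to 0$ as $n\to\infty$. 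Symmetrically, if $\liminf_{x\to\infty}\vartheta(x)/x<1$, pick $\lambda<1$ and $x_n\to\infty$ with $\vartheta(x_n)\le \lambda x_n$; then for $t\in[\lambda x_n,x_n]$ one has $\vartheta(t)\le\vartheta(x_n)\le\lambda x_n$, and the same substitution gives
\begin{equation*}
\int_{\lambda x_n}^{x_n}\frac{\vartheta(t)-t}{t^2}\,dt \;\le\; \int_{\lambda x_n}^{x_n}\frac{\lambda x_n - t}{t^2}\,dt \;=\; \int_{\lambda}^{1}\frac{\lambda-u}{u^2}\,du \;=\; 1+\log\lambda-\lambda \;<\;0,
\end{equation*}
another contradiction. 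Combining both bounds, $\limsup_{x\to\infty}\vartheta(x)/x = \liminf_{x\to\infty}\vartheta(x)/x = 1$, which is exactly $\vartheta(x)\sim x$.

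The substantive point of the argument is the trick in the first display: we do \emph{not} know anything pointwise about $\vartheta(t)-t$, only that its weighted integral tails vanish. The trick is to convert a single pointwise excursion $\vartheta(x_n)\ge \lambda x_n$ into a lower bound that persists over the whole interval $[x_n,\lambda x_n]$ purely by monotonicity, which is then long enough (in the scale-invariant measure $dt/t^2$ times $x_n^2$, or equivalently after the substitution $u=t/x_n$) to produce a positive constant. The main obstacle is therefore conceptual bookkeeping rather than computation: ensuring the direction of inequalities, the endpoints, and the substitution all line up so that the scale-invariance of the problem absorbs $n$ completely, leaving a fixed positive (resp.\ negative) constant that cannot coexist with the Cauchy criterion.
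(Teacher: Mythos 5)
Your proposal is correct and follows essentially the same route as the paper: both deduce the result by contradiction from the vanishing tails of the convergent integral in Lemma \ref{Lemma:Converge}, using the monotonicity of $\vartheta$ and the substitution $u=t/x$ to produce the scale-independent constants $\lambda-1-\log\lambda>0$ and $1-\lambda+\log\lambda<0$. Your ``Cauchy criterion'' phrasing is just the paper's statement that $I(x)=\int_x^\infty(\vartheta(t)-t)t^{-2}\,dt\to 0$, so the two arguments coincide.
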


\begin{proof}
    Observe that
    \begin{equation}\label{eq:TailZero}
 	\underbrace{\int_1^{\infty} \frac{\vartheta(t)-t}{t^2}\,dt \quad\text{exists}}_{\text{by Lemma \ref{Lemma:Converge}}}
    \qquad\implies\qquad
    \lim_{x\to\infty} \underbrace{\int_x^{\infty} \frac{\vartheta(t)-t}{t^2}\,dt }_{I(x)} = 0.
    \end{equation}
    
    \noindent\textsc{Step 1.} Suppose toward a contradiction that 
    \begin{equation*}
    \limsup_{x\to\infty} \frac{\vartheta(x)}{x}  > 1,
    \quad\text{and let}\quad
    \limsup_{x\to\infty} \frac{\vartheta(x)}{x}  > \alpha > 1.
    \end{equation*}
    Then there are arbitrarily large $x > 1$ such that $\vartheta(x) > \alpha x$.  For such ``bad'' $x$,
    \begin{align*}
    I(\alpha x) - I(x)
    &= \int_x^{\alpha x} \frac{\vartheta(t) - t}{t^2}\,dt 
    \geq \int_x^{\alpha x} \frac{\alpha x - t}{t^2}\,dt && (\substack{\text{$\vartheta(x)  > \alpha x$ and}\\ \text{$\vartheta$ is increasing}})\\
    &= \int_1^{\alpha} \frac{\alpha x - xu}{x^2u^2}x\,du
    = \int_1^{\alpha} \frac{\alpha - u}{u^2}\,du     && (t = xu,\, dt = x\,du)\\
    &= \alpha-1 - \log \alpha 
    > 0.
    \end{align*}
    Since
    \begin{equation*}
    \liminf_{\substack{x\to\infty\\ \text{$x$ bad}}} \big(  I(\alpha x) - I(x) \big)  > 0
    \end{equation*}
    contradicts \eqref{eq:TailZero}, we conclude
    \begin{equation*}
    \limsup_{x\to\infty} \frac{\vartheta(x)}{x} \leq  1.
    \end{equation*}
    
    \noindent\textsc{Step 2.}  This is similar to the first step.
    Suppose toward a contradiction that 
    \begin{equation*}
        \liminf_{x\to\infty} \frac{\vartheta(x)}{x}  < 1,
            \quad\text{and let}\quad
         \liminf_{x\to\infty} \frac{\vartheta(x)}{x}  < \beta < 1;
    \end{equation*}
    the limit inferior is nonnegative since $\vartheta(x)$ is nonnegative.   
    Then there are arbitrarily large $x > 1$ such that $\vartheta(x)  < \beta x$.  For such ``bad'' $x$,
    \begin{align*}
    I(x) - I(\beta x) 
    &= \int^x_{\beta x} \frac{\vartheta(t) - t}{t^2}\,dt 
    \leq \int^x_{\beta x}\frac{\beta x - t}{t^2}\,dt &&  (\substack{\text{$\vartheta(x)  < \beta x$ and}\\ \text{$\vartheta$ is increasing}})\\
    &= \int_{\beta}^1 \frac{\beta x - xu}{x^2u^2}x\,du
    = \int_{\beta}^1 \frac{\beta - u}{u^2}\,du && (t = xu,\, dt = x\,du)\\
    &=1 - \beta + \log \beta
    < 0.
    \end{align*}
    Since
    \begin{equation*}
    \liminf_{\substack{x\to\infty\\ \text{$x$ bad}}} \big( I(x) - I(\beta x)  \big)  < 0
    \end{equation*}
    contradicts \eqref{eq:TailZero}, we conclude
    \begin{equation*}
    \liminf_{x\to\infty} \frac{\vartheta(x)}{x} \geq  1.
    \end{equation*}
    
    \noindent\textsc{Step 3.}
    Since
    \begin{equation*}
    \limsup_{x\to\infty} \frac{\vartheta(x)}{x} \leq 1
    \qquad \text{and} \qquad
    \liminf_{x\to\infty} \frac{\vartheta(x)}{x} \geq 1,
    \end{equation*}
    it follows that  $\lim_{x\to\infty} \vartheta(x)/x = 1$; that is, $\vartheta(x) \sim x$.
\end{proof}

\begin{remark}
Let $f(x) = x - 1 - \log x$ for $x>0$.
Then $f'(x) = 1-1/x$ and $f''(x) = 1/x^2$, so
$f$ is strictly positive on $(0,1)$ and $(1,\infty)$; see Figure \ref{Figure:LogThing}.
This ensures the positivity of $\alpha - 1 - \log \alpha$ for $\alpha > 1$ and the negativity of 
$1 - \beta + \log \beta$ for $\beta \in (0,1)$.
\end{remark}

\begin{figure}
        \includegraphics[width=0.5\textwidth]{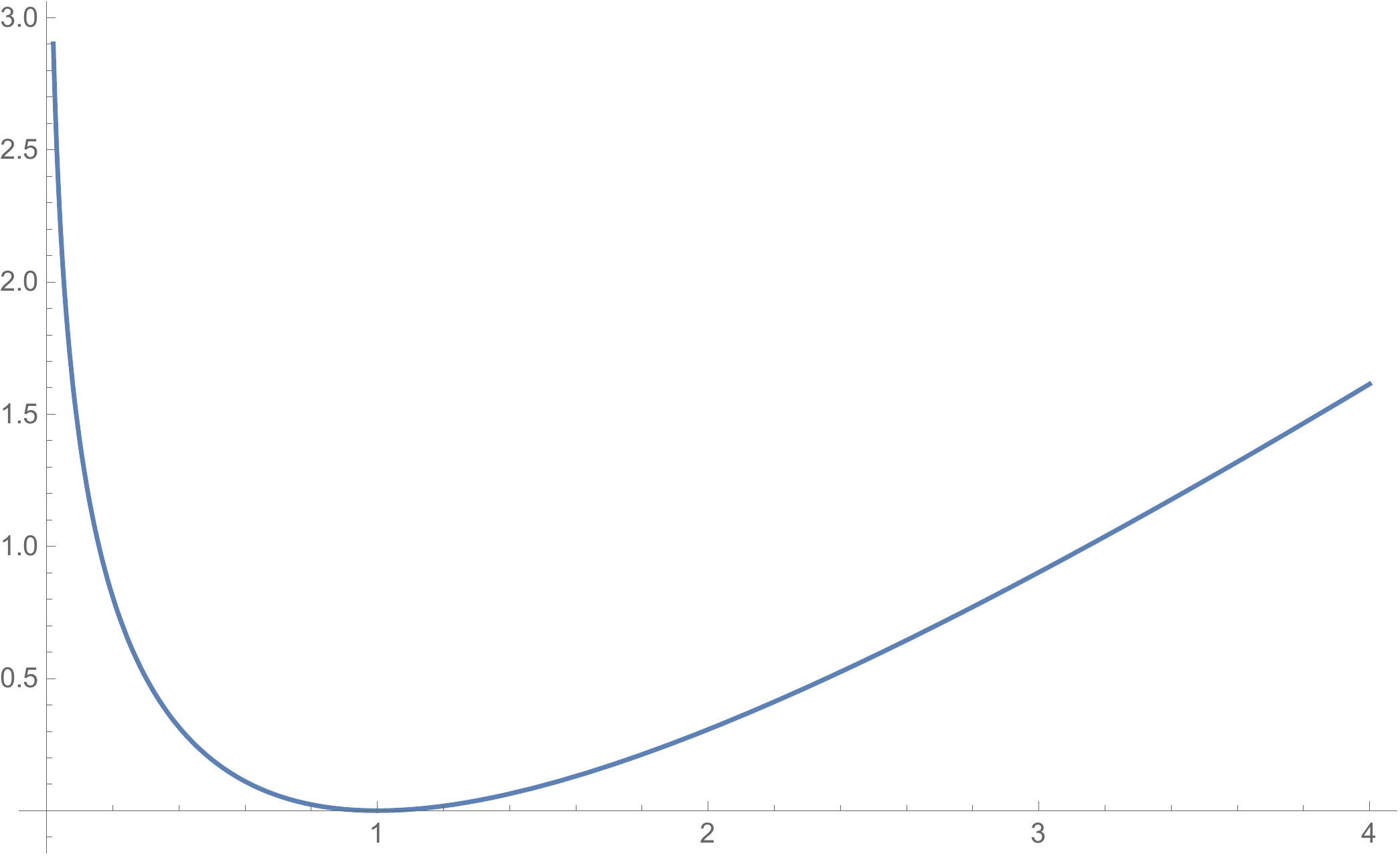}
        \caption{Graph of $f(x) = x - 1 - \log x$.}
        \label{Figure:LogThing}
\end{figure}

\begin{remark}
One can show that $\pi(x) \sim x / \log x$ implies $\vartheta(x) \sim x$, 
although this is not necessary for our purposes.  In light of Theorem \ref{Theorem:Finale} below, this implication 
shows that $\pi(x) \sim x/\log x$ is equivalent to $\vartheta(x) \sim x$.
\end{remark}

\section{Completion of the Proof}\label{Section:Conclusion}
At long last we are ready to complete the proof of the prime number theorem.
We break the conclusion of the proof into three short steps.

\begin{theorem}[Prime Number Theorem]\label{Theorem:Finale}
$\displaystyle \pi(x) \sim \frac{x}{\log x}$.
\end{theorem}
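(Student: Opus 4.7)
The plan is to deduce the prime number theorem directly from $\vartheta(x) \sim x$ (Theorem \ref{Theorem:Thetax}) by sandwiching $\pi(x) \log x / x$ between quantities that both tend to $1$. The key observation is that $\vartheta$ and $\pi$ are related by $\vartheta(x) = \sum_{p \leq x} \log p$, so each summand in $\vartheta$ is at most $\log x$ (giving a useful lower bound on $\pi$) and at least $\log p$ for $p$ near $x$ (giving a useful upper bound on $\pi$ after discarding small primes).

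\textbf{Step 1 (liminf).}  Since $\log p \leq \log x$ for every prime $p \leq x$,
\begin{equation*}
\vartheta(x) = \sum_{p \leq x} \log p \leq \pi(x) \log x,
\end{equation*}
so $\pi(x) \log x / x \geq \vartheta(x)/x$. Applying Theorem \ref{Theorem:Thetax} yields $\liminf_{x\to\infty} \pi(x) \log x / x \geq 1$.

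\textbf{Step 2 (limsup).}  This is the less obvious direction, since the trivial bound $\vartheta(x) \geq \log 2 \cdot \pi(x)$ is far too weak. I would fix $\epsilon \in (0,1)$ and restrict the sum to primes in the range $(x^{1-\epsilon}, x]$, where $\log p > (1-\epsilon)\log x$:
\begin{equation*}
\vartheta(x) \geq \sum_{x^{1-\epsilon} < p \leq x} \log p \geq (1-\epsilon)(\log x)\bigl(\pi(x) - \pi(x^{1-\epsilon})\bigr).
\end{equation*}
Using the crude bound $\pi(x^{1-\epsilon}) \leq x^{1-\epsilon}$ and rearranging,
\begin{equation*}
\frac{\pi(x) \log x}{x} \leq \frac{1}{1-\epsilon}\cdot\frac{\vartheta(x)}{x} + \frac{\log x}{x^{\epsilon}}.
\end{equation*}
Letting $x\to\infty$ and using Theorem \ref{Theorem:Thetax} together with $\log x / x^{\epsilon} \to 0$ gives $\limsup_{x\to\infty} \pi(x)\log x / x \leq 1/(1-\epsilon)$, and then letting $\epsilon \to 0^+$ yields $\limsup \leq 1$.

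\textbf{Step 3 (conclusion).}  Combining the two bounds gives $\lim_{x\to\infty} \pi(x)\log x / x = 1$, which is precisely $\pi(x) \sim x/\log x$.  The main obstacle is Step 2: the small primes contribute disproportionately more to $\pi$ than to $\vartheta$, and the $\epsilon$-truncation at $x^{1-\epsilon}$ is what neutralizes this discrepancy.  Everything else is bookkeeping, and no new complex-analytic input is required beyond what has already been established.
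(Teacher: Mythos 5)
Your proposal is correct and follows essentially the same route as the paper: the bound $\vartheta(x) \leq \pi(x)\log x$ for the liminf, the truncation of the sum to primes in $(x^{1-\epsilon},x]$ with the crude estimate $\pi(x^{1-\epsilon}) \leq x^{1-\epsilon}$ for the limsup, and the final squeeze. Your rearrangement of the inequality before taking the limsup is a minor cosmetic variant of the paper's manipulation, not a different argument.
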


\begin{proof}
    Recall from Theorem \ref{Theorem:Thetax} 
    that $\vartheta(x) \sim x$; that is, $\lim_{x\to\infty} \vartheta(x)/x =1$.
    
    \noindent\textsc{Step 1.} Since
    \begin{equation*}
    \vartheta(x) = \sum_{p\leq x} \log p \leq \sum_{p \leq x} \log x = (\log x) \sum_{p\leq x} 1 = \pi(x) \log x,
    \end{equation*}
    it follows that
    \begin{equation*}
    1 = \lim_{x\to\infty} \frac{ \vartheta(x)}{x} = \liminf_{x\to\infty} \frac{\vartheta(x)}{x}\leq \liminf_{x\to\infty}\frac{\pi(x) \log x}{x} .
    \end{equation*}
    
    \noindent \textsc{Step 2.} For any $\epsilon > 0$,
    \begin{align*}
    \vartheta(x)
    &= \sum_{p \leq x} \log p 
    \geq \sum_{x^{1-\epsilon} < p \leq x} \log p  \\
    &\geq \sum_{x^{1-\epsilon} < p \leq x} \log (x^{1-\epsilon} ) 
    =  \log (x^{1-\epsilon} ) \sum_{x^{1-\epsilon} < p \leq x} 1\\
    &=  (1-\epsilon)(\log x) \bigg(\sum_{p \leq x} 1 - \sum_{p \leq x^{1-\epsilon}} 1 \bigg)\\
    &\geq (1-\epsilon)\big( \pi(x) - x^{1-\epsilon} \big)\log x.
    \end{align*}
    Therefore,
    \begin{align*}
    1
    &= \lim_{x\to\infty} \frac{\vartheta(x)}{x} = \limsup_{x\to\infty} \frac{\vartheta(x)}{x} \\
    &\geq \limsup_{x\to\infty}\left(\frac{(1-\epsilon) \big( \pi(x) - x^{1-\epsilon} \big)\log x}{x}\right) \\
    &= (1-\epsilon)\limsup_{x\to\infty} \left( \frac{\pi(x)\log x}{x} - \frac{\log x}{x^{\epsilon}} \right) \\
    &= (1-\epsilon)\limsup_{x\to\infty}  \frac{\pi(x)\log x}{x} - (1-\epsilon)\lim_{x\to\infty}\frac{\log x}{x^{\epsilon}}  \\
    &= (1-\epsilon)\limsup_{x\to\infty}  \frac{\pi(x)\log x}{x}.
    \end{align*}
    Since $\epsilon > 0$ was arbitrary, 
    \begin{equation*}
    \limsup_{x\to\infty}  \frac{\pi(x)\log x}{x} \leq 1.
    \end{equation*}
    
    \noindent\textsc{Step 3.}
    Since 
    \begin{equation*}
    1 \leq \liminf_{x\to\infty}  \frac{\pi(x)}{x/\log x} \leq 
    \limsup_{x\to\infty}  \frac{\pi(x)}{x/\log x} \leq 1,
    \end{equation*}
    we obtain
    \begin{equation*}
    \lim_{x\to\infty} \frac{\pi(x)}{x/\log x} = 1. 
    \end{equation*}    
    This concludes the proof of the prime number theorem.
\end{proof}

It is probably best not to drag things out at this point.  Nothing can compete with finishing off one of the major
theorems in mathematics.  
After coming this far, the reader should be convinced that the proof of the prime number theorem,
as presented here, is largely a theorem of complex analysis (obviously this is a biased perspective based upon
our choice of proof).  Nevertheless, we hope that the reader is convinced that a proof of the prime number theorem
can function as an excellent capstone for a course in complex analysis.

\bibliographystyle{amsplain} 
\bibliography{PNT}
\end{document}